\newcommand{\Q}{\mathbb{Q}}
\newcommand{\var}[1]{\mathcal{#1}}
\newcommand{\alphabet}[1]{\mathcal{#1}}
\newcommand{\alg}[1]{\mathbf{#1}}
\newcommand{\Con}{\mathbf{Con}}
\theoremstyle{plain}
\newtheorem{theorem}{Theorem}[section]
\newtheorem{corollary}{Corollary}[section]
\newtheorem{question}{Question}[section]
\newtheorem{proposition}{Proposition}[section]
\newtheorem{claim}[theorem]{Claim}
\newtheorem{lemma}[theorem]{Lemma}
\def\indexed #1#2{\mathop{#1\kern0pt}\limits_{#2}}
\def\xmatrix#1#2{\xmatrixA{\left(\vphantom{\begin{matrix}#2\end{matrix}}\right.}%
   \begin{matrix}#1\cr#2\end{matrix}%
   \xmatrixA{\left.\vphantom{\begin{matrix}#2\end{matrix}}\right)}%
}
\def\xmatrixA#1{\lower1.5ex\hbox{$\vcenter{\hbox{$#1$}}$}}
\def\medskip
\title{Maltsev conditions for general congruence meet-semidistributive
  algebras}
\author{Miroslav Ol\v s\'ak}
\begin{document}

\maketitle

\thanks{
  Partially supported
  by the Czech Grant Agency (GAČR) under grant no. 18-20123S,
  by the National Science Centre Poland under grant no. UMO-2014/13/B/ST6/01812
  and by the PRIMUS/SCI/12 project of the Charles University.
}
\begin{abstract}
  Meet semidistributive varieties are in a sense the last of
  the most important classes in universal algebra
  for which it is unknown whether it can be characterized by a strong
  Maltsev condition.
  We present a new, relatively simple Maltsev condition characterizing
  the meet-semidistributive varieties, and provide a candidate for a
  strong Maltsev condition.
\end{abstract}

\section{Introduction}

The tame congruence theory (TCT)~\cite{FiniteAlgebras}, a structure
theory of general finite algebras, has revealed that there are only 5
possibly local behaviors of a finite algebra:
\begin{enumerate}[(1)]
  \itemsep0em 
\item algebra having only unary functions, % (1)
\item one-dimensional vector space,        % (2)
\item the two-element boolean algebra,     % (3)
\item the two-element lattice,             % (4)
\item the two element semilattice.         % (5)
\end{enumerate}

If there is a local behavior of type (i) in an algebra $\alg A$,
the algebra is said to \emph{have} type (i).
A $\var V$ variety have type (i) if there is an algebra
$\alg A\in\var V$ that have (i).
If an algebra or variety does not have a type (i), it is said to
\d{omit} type (i).
The set of ``bad'' types that are omitted in a variety is an important
structural information; for instance, it plays a significant role in
the fixed-template constraint satisfaction
problem~\cite{CSPSurvey}. The ``worst'' type is type (1) and omitting
it has been characterized in many equivalent ways, one of which is
given in the following theorem.

\begin{theorem}
  \label{wnu-theorem}
\cite{WNU} A locally finite variety $\var V$ omits type (1) if and
only if there is an idempotent WNU (weak near unanimity) term in $\alg
A$, that is a term satisfying the following identities:
\begin{itemize}
  \item idempotence: $t(x,x,x,\ldots,x)=x$,
  \item weak near unanimity: $t(y,x,x,\ldots,x) = t(x,y,x,\ldots,x) =
    \cdots = t(x,\ldots,x,y)$
\end{itemize}
for any $x,y\in\alg A$.
\end{theorem}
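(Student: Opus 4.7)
The plan is to prove the two directions separately. The reverse direction (existence of an idempotent WNU term implies omitting type~1) is essentially a one-line observation, while the forward direction carries all the work and uses the machinery of tame congruence theory (TCT) together with local finiteness.

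For the reverse direction, I would assume $\var V$ admits an idempotent WNU term $t$ of arity $n$, and suppose toward contradiction that some $\alg A \in \var V$ has type~1. Then there is a minimal set $U \subseteq A$ of size $\ge 2$ on which the induced algebra is essentially unary (a $G$-set). Every term operation restricted to $U$ depends on at most one variable, so $t|_U$ has the form $u(x_i)$ for some coordinate $i$ and some unary operation $u$. Idempotence forces $u(x) = x$, so $t|_U$ is the $i$-th projection. The WNU identity $t(y, x, \ldots, x) = t(x, \ldots, x, y)$ then specializes on $U$ to $y = x$, contradicting $|U| \ge 2$.

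For the forward direction, my plan is to start from a weaker known Maltsev condition equivalent to omitting type~1 for locally finite varieties --- typically, the existence of Taylor terms or Hobby--McKenzie terms --- and then upgrade to a single WNU term by symmetrization. Concretely, beginning with an idempotent Taylor term $s$ of arity $n$, I would look at the tuple of elements $s(y,x,\ldots,x)$, $s(x,y,x,\ldots,x)$, \ldots, $s(x,\ldots,x,y)$ in the free algebra $\alg F_{\var V}(x, y)$ and seek a single term that sends this tuple to a common value. Iterating this collapsing, while exploiting local finiteness to ensure the process stabilizes in finitely many steps, should yield a term with all the required WNU symmetries.

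The main obstacle is the collapsing step itself: at each stage one must produce a \emph{single} term that equalizes the $n$ ``witness'' elements. This is where omitting type~1 is used decisively. If no such term existed, the subalgebra of $\alg F_{\var V}(x,y)$ generated by these witnesses would admit an essentially unary quotient, and a TCT analysis would then extract a type~1 minimal set, contradicting the hypothesis. Making this termination and extraction rigorous --- particularly ensuring that the iterated construction converges to a term with idempotence preserved --- is the delicate heart of the argument, and would follow the Mar\'oti--McKenzie strategy.
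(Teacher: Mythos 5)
First, note that the paper does not prove this theorem at all: it is quoted as a known result of Mar\'oti and McKenzie (the citation \cite{WNU}), so there is no in-paper argument to compare yours against. Judged on its own, your reverse direction is essentially right and is the standard argument: an idempotent WNU term is in particular an idempotent Taylor term, and on a type-1 trace (where the induced algebra modulo the lower congruence is polynomially equivalent to a $G$-set, so every induced operation depends on at most one variable and its unary part is a permutation) idempotence forces that unary part to be the identity, and any one WNU identity then collapses the trace. The only bookkeeping you gloss over is that one must pass to the polynomial $e\circ t$ restricted to a trace modulo $\alpha$, not literally ``$t|_U$'' as a term operation, but that is routine TCT.

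The forward direction, however, has a genuine gap: your ``collapsing step'' is not a step, it is the entire theorem. You assert that if no single term equalizes the $n$ witness elements $s(y,x,\ldots,x),\ldots,s(x,\ldots,x,y)$ in $\alg F_{\var V}(x,y)$, then the subalgebra they generate ``would admit an essentially unary quotient'' from which a type-1 minimal set could be extracted. No justification is offered, and none is available at this level of generality: failure to equalize finitely many elements of a free algebra does not by itself produce an essentially unary quotient, and a na\"ive iteration of partial collapses need not preserve the WNU shape of the identities (equalizing some of the witnesses by applying a further term changes all of them). The actual Mar\'oti--McKenzie proof is a substantial induction on finite algebras that separately analyzes minimal sets of types 2, 3, 4, 5 and builds the symmetric term by different mechanisms in each case; ``follow the Mar\'oti--McKenzie strategy'' is a citation, not a proof. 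As written, your argument establishes only the easy implication.
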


Such a characterization of
varieties of algebras by means of the existence of terms satisfying
certain identities are in general called Maltsev conditions. More
precisely, a strong Maltsev condition is given by a finite set of term
symbols and a finite set of
identities. A given strong Maltsev condition is satisfied in a variety
$\var V$ if we can substitute the term symbols by actual terms in the
variety in such a way that all the identities are satisfied. A general
Maltsev condition is then a disjuction of countably many strong Maltsev
conditions (as in the example of Theorem~\ref{wnu-theorem}).

Whenever a variety $\var V$ satisfies a certain Maltsev condition and
$\var W$ is interpretable into $\var W$, then $\var W$ satisfies the
Maltsev condition too. For the notion of interpretability, we refer
the reader to~\cite{FiniteAlgebras}. There are following relations
between types of locally finite varieties and the interpretability.
\begin{itemize}
\item Any variety that has type (1) is interpretable into any variety.
\item Any variety is interpretable into a variety that has type (3).
\item Any variety that has type (5) is interpretable into a variety
  that has type (4).
\end{itemize}
Therefore, it is reasonable to ask for the Maltsev
conditions for the following classes:
$$
\var M_{\{1\}}, \var M_{\{1,2\}}, \var M_{\{1,5\}},
\var M_{\{1,2,5\}},\var M_{\{1,4,5\}}, \var M_{\{1,2,4,5\}},
$$
where $\var M_S$ is the class of all the algebras that omits all the types
from the set $S$. There is an appropriate Maltsev condition
for all six classes.

It was proved that $\var M_{\{1\}}$ and $M_{\{1,2\}}$ can be
characterized by strong Maltsev conditions. Recall that idempotent
term is a term $t$ satisfying the equation $t(x,x,\ldots,x)=x$.
\begin{theorem}
  \label{siggers}
\cite{OptimalStrong} A locally finite variety omits type (1) if and
only if it has an idempotent 4-ary term $s$ satisfying $s(r,a,r,e) =
s(a,r,e,a)$.
\end{theorem}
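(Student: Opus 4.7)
The plan is to prove the two implications separately, handling ($\Leftarrow$) routinely and devoting the bulk of the effort to ($\Rightarrow$).

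For ($\Leftarrow$), I would first observe that an idempotent Siggers term $s$ cannot be essentially unary on any non-trivial algebra of $\var V$: idempotence would force such an $s$ to be a projection, and then $s(r,a,r,e) = s(a,r,e,a)$ would collapse two of $r,a,e$ to each other. Since on a type~(1) minimal set every term operation is (polynomially) essentially unary, the existence of $s$ obstructs the appearance of a type~(1) minimal set in any algebra of $\var V$. A cleaner alternative route is to derive an idempotent WNU term from $s$ by symmetrization and iteration, and then invoke the ``$\Leftarrow$'' direction of Theorem~\ref{wnu-theorem}.

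For ($\Rightarrow$), I would pass to the free idempotent algebra $\mathbf{F} = \mathbf{F}_{\var V}(r,a,e)$ on three generators and consider the subalgebra $\mathbf{R} \leq \mathbf{F}^2$ generated by the four pairs
$$(r,a), \quad (a,r), \quad (r,e), \quad (e,a).$$
An idempotent Siggers term exists in $\var V$ if and only if $\mathbf{R}$ meets the diagonal $\Delta_F = \{(x,x) : x \in F\}$: any element $(t,t)\in \mathbf{R}$ is obtained from the four generators by some $4$-ary term $s$, and that $s$ witnesses the identity $s(r,a,r,e) = s(a,r,e,a) = t$. Both coordinate projections of $\mathbf{R}$ contain $\{r,a,e\}$ and therefore equal all of $F$, so $\mathbf{R}$ is a subdirect square of $\mathbf{F}$.

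The main obstacle is to show that $\mathbf{R} \cap \Delta_F \neq \emptyset$ using only the hypothesis that $\var V$ omits type~(1). My plan is to invoke Theorem~\ref{wnu-theorem} to extract an idempotent WNU term $w$ of some arity $n$ and then to evaluate $w$ coordinatewise on a carefully chosen $n$-tuple of elements of $\mathbf{R}$ assembled from the four generators. The near-unanimity symmetry of $w$ in its last $n-1$ arguments, combined with idempotence, should force the two coordinates of the result to agree and so land on the diagonal. Pinning down the right arrangement of generators is the combinatorial heart of the argument; a natural first attempt is a tuple interleaving $(r,a)$ with $(a,r),(r,e),(e,a)$ in a pattern compatible with the WNU identities, exploiting the $r \leftrightarrow a$ swap between the first two generators and the $r \leftrightarrow a$, $e \leftrightarrow a$ swaps in the other two to align columns. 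If direct pattern-matching fails, the fallback is to pick $\mathbf{R}$ of minimum size with $\mathbf{R} \cap \Delta_F = \emptyset$ and apply tame-congruence-theoretic analysis of a minimal set of the algebra of $\mathbf{R}$—using the omission of type~(1) to rule out the obstructing local behavior—to force the contradiction.
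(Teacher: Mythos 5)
This theorem is not proved in the paper at all: it is quoted from \cite{OptimalStrong}, so there is no in-paper argument to compare yours against, and your proposal has to stand on its own. Your backward direction is fine as a sketch (the Siggers identity is not satisfiable by projections, so an idempotent term satisfying it is a nontrivial idempotent Maltsev condition, which rules out a type~(1) minimal set). Your reduction of the forward direction is also the correct and standard one: the relation $\mathbf{R}\leq\mathbf{F}^2$ generated by $(r,a),(a,r),(r,e),(e,a)$ is subdirect, and a Siggers term exists iff $\mathbf{R}$ meets the diagonal.

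The genuine gap is in how you propose to find the diagonal element. Applying a WNU term $w$ (once, or in any fixed composition) to a tuple assembled from the four generators produces a pair $\bigl(w(\dots),w(\dots)\bigr)$ whose two coordinates are terms in the three \emph{distinct} free generators $r,a,e$; for the result to lie on the diagonal these two terms must be provably equal from the WNU identities alone. But the WNU identities only permit permuting arguments when all but one of them coincide, so they say nothing about evaluations such as $w(r,a,\dots,a)$ versus $w(a,r,\dots,r)$, and no arrangement of the generators makes the two coordinates match. More fundamentally, if such a syntactic derivation existed it would prove the Siggers identity in \emph{every} variety with a WNU term, with no use of local finiteness --- yet all known proofs of this theorem go through finiteness-dependent machinery (a tame-congruence-theoretic analysis of minimal sets, or the Barto--Kozik loop lemma for smooth digraphs of algebraic length~1, which applies here because your $\mathbf{R}$ contains both a $2$-cycle $r\leftrightarrow a$ and a $3$-cycle $r\to e\to a\to r$). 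Your ``fallback'' of taking a minimal counterexample and doing TCT on a minimal set points at the real proof but contains none of its content; as written, the combinatorial heart of the forward direction is missing.
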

\begin{theorem}[Theorem 2.8 of \cite{Kozik2015}]
  \label{wnu-glued}
  A locally finite variety omits types (1) and (2) if and only if it has
  three-ary and four-ary idempotent terms $w_3,w_4$ satisfying equations
  $$
  w_3(yxx)=w_3(xyx)=w_3(xxy)=w_4(yxxx)
  $$$$
  =w_4(xyxx)=w_4(xxyx)=w_4(xxxy).
  $$
\end{theorem}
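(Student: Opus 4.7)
The forward direction is direct; the reverse direction requires the substantive machinery of absorption and tame congruence theory.

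\emph{Forward direction.} Assume $\var V$ has idempotent terms $w_3, w_4$ satisfying the displayed identities. Since $w_3$ is an idempotent WNU, Theorem~\ref{wnu-theorem} gives omission of type (1). For type (2): suppose $\var V$ has type (2), so TCT produces a nontrivial module $\alg M$ over some ring $R$ on which every idempotent term acts as an affine combination $\sum c_i x_i$ with $\sum c_i = 1$. The WNU identities force all the $c_i$ to be equal, so $n$ must be invertible in $R$ and $w_n$ acts as $\tfrac{1}{n}(x_1+\cdots+x_n)$. Substituting into $w_3(y,x,x)=w_4(y,x,x,x)$ yields $\tfrac{1}{3}y+\tfrac{2}{3}x=\tfrac{1}{4}y+\tfrac{3}{4}x$, i.e.\ $\tfrac{1}{12}(y-x)=0$ for all $x,y\in\alg M$, contradicting nontriviality.

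\emph{Reverse direction.} Assume $\var V$ is locally finite and omits types (1) and (2). I would work in the two-generated free algebra $\alg F := F_{\var V}(\{x,y\})$, which is finite by local finiteness. For each $n\geq 3$, set
$$U_n := \{\, a\in F : \text{some idempotent } n\text{-ary WNU } w \text{ satisfies } w(y,x,\ldots,x)=\cdots=w(x,\ldots,x,y)=a \,\};$$
the target identities reduce to proving $U_3\cap U_4\neq\emptyset$. First, $U_3$ and $U_4$ are nonempty: omitting type (1) in a locally finite variety yields WNUs of every sufficiently large arity (a standard strengthening of Theorem~\ref{wnu-theorem}), and the composition $w_n(w_n(x_1,\ldots,x_n),x_2,\ldots,x_n)$ style constructions move one between arities so as to reach $n=3$ and $n=4$. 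Second, each $U_n$ is a subuniverse of $\alg F$: given $n$-ary WNUs $w^{(1)},\ldots,w^{(m)}$ witnessing $u_1,\ldots,u_m\in U_n$ and an $m$-ary idempotent term $t$, the coordinatewise composition $t(w^{(1)},\ldots,w^{(m)})$ is again an $n$-ary idempotent WNU and witnesses $t(u_1,\ldots,u_m)\in U_n$.

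The remaining step is to extract $U_3\cap U_4\neq\emptyset$ from the omission of type (2). This is the main obstacle: without excluding affine behavior the two subuniverses can be genuinely disjoint, the $U_3$ side being blocked by a characteristic-$3$ obstruction and $U_4$ by a characteristic-$2$ one, as the forward-direction computation above illustrates. To force the intersection I would invoke absorption-theoretic arguments of Barto--Kozik inside the finite algebra $\alg F\times\alg F$: show that the subuniverse $U_3\times U_4$ absorbs some suitable subuniverse containing a diagonal element, and use the omission of type (2) to conclude that the only minimal absorbing subuniverses in sight meet the diagonal. The delicate part is controlling the interaction between the two arities so that the absorbing chain actually terminates at a common WNU head, and this is where the technical weight of the proof concentrates.
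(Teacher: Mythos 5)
First, a point of reference: the paper does not prove this statement at all --- it is quoted verbatim as Theorem~2.8 of \cite{Kozik2015}, so there is no internal proof to compare your attempt against; your proposal has to stand on its own.

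Your ``forward'' direction (terms $\Rightarrow$ omitting types (1) and (2)) is correct and is the same kind of computation the paper itself performs for $(m_1+m_2)$-terms: idempotence forces $\sum c_i=1$ in a module, the WNU identities force the coefficients of $w_n$ to coincide on a faithful module (the type-(2) trace algebra is a one-dimensional vector space, so $(c_i-c_j)(y-x)=0$ really does give $c_i=c_j$), and the glued identity $w_3(y,x,x)=w_4(y,x,x,x)$ then yields $\tfrac{1}{12}(y-x)=0$; together with the observation that characteristics $2$ and $3$ are already excluded by the existence of $w_4$ and $w_3$ respectively, this rules out type (2). The genuine gap is in the other direction. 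Your setup is sensible: $U_3,U_4$ are nonempty (Mar\'oti--McKenzie gives WNUs of all arities $\ge 3$ in a locally finite variety omitting types (1) and (2), not merely of ``sufficiently large'' arity, so you should cite that rather than an arity-shifting composition, which does not obviously preserve the WNU identities when passing from arity $n$ to arity $n-1$), and the closure of $U_n$ under term operations is right. But the entire content of the theorem is the step you defer: showing $U_3\cap U_4\neq\emptyset$. Writing ``I would invoke absorption-theoretic arguments of Barto--Kozik \dots this is where the technical weight of the proof concentrates'' is an accurate diagnosis, not a proof; no candidate absorbing subuniverse is exhibited, no absorption is verified, and no mechanism is given by which omitting type (2) forces the two subuniverses of the finite free algebra $\alg F$ to meet. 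As it stands the hard implication is asserted, not proved, so the proposal is incomplete.
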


In the same paper~\cite{Kozik2015} the authors have demostrated that
the remaining classes, that is $\var M_{{1,5}}, \var M_{{1,2,5}}, \var
M_{{1,4,5}}, \var M_{{1,2,4,5}}$,
cannot be characterized by strong Maltsev conditions.

Although types in the TCT are defined only for locally finite
varieties (because only finite algebras are assigned types), the
type-omitting classes have alternative characterizations which do not
refer to the type-set. They are shown in the following table taken
from~\cite{Kozik2015}.

\begin{center}
  \begin{tabularx}{\textwidth}{ | l | X | }
    \hline
    Type Omitting Class        &  Equivalent property,\cr
\hline
\hline
    $\var M_{\{1\}}$              &  satisfies a nontrivial idempotent Maltsev condition,\cr
\hline
    $\var M_{\{1,5\}}$            &  satisfies a nontrivial congruence identity,\cr
\hline
    $\var M_{\{1,4,5\}}$          &  congruence $n$-permutable, for some $n>1$,\cr
\hline
    $\var M_{\{1,2\}}$            &  congruence meet semidistributive,\cr
\hline
    $\var M_{\{1,2,5\}}$          &  congruence join semidistributive,\cr
\hline
    $\var M_{\{1,2,4,5\}}$        &  congruence $n$-permutable for some $n$ and
                              congruence join semidistributive. \cr
\hline
  \end{tabularx}
\end{center}

Each of the properties in the right column of the table is
characterized by an idempotent Maltsev
condition~\cite{FiniteAlgebras} for 
general (not necessarily locally finite) varieties. However,
Theorems~\ref{siggers}, \ref{wnu-glued} giving strong Maltsev
conditions are not guaranteed to 
work. Indeed, there is an example of an
idempotent algebra that satisfy a non-trivial Maltsev
condition, but has no term $s(r,a,r,e)=s(a,r,e,a)$,
see~\cite{Kazda}. However, it turned out that
the first property is characterized by another strong Maltsev
condition.

\begin{theorem}
  \cite{WeakestIdempotent}
  An idempotent algebra satisfy a non-trivial Maltsev condition if and
  only if it has a term $t$ such that
  $$
  t(yxx,xyy) = t(xyx,yxy) = t(xxy,yyx).
  $$
\end{theorem}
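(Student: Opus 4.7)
I split the argument into the two directions.

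Easy direction: If such $t$ exists, then the three identities together with idempotence (which is automatic in $\alg A$) constitute an idempotent Maltsev condition. To verify non-triviality, one checks that no projection $\pi_i(x_1,\ldots,x_6) = x_i$, for $i \in \{1,\ldots,6\}$, satisfies all three identities. At each coordinate $i$ the three rows $(y,x,x,x,y,y)$, $(x,y,x,y,x,y)$, $(x,x,y,y,y,x)$ contain both $x$ and $y$, so at least one equality between their $i$-th entries fails, making the corresponding identity false for $\pi_i$.

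Hard direction: Assume $\alg A$ is idempotent and satisfies a non-trivial Maltsev condition. Let $\var V = \mathrm{HSP}(\alg A)$ and let $\alg F = F_{\var V}(x,y)$ be the free algebra on two generators. Via the universal property of $\alg F$, the existence of an Olšák term $t$ translates into a statement about the subalgebra $R \leq \alg F^3$ generated by the six tuples
$$
e_1=(y,x,x),\ e_2=(x,y,x),\ e_3=(x,x,y),\ e_4=(x,y,y),\ e_5=(y,x,y),\ e_6=(y,y,x).
$$
A generic element $f(e_1,\ldots,e_6)\in R$ has coordinates $f(y,x,x,x,y,y)$, $f(x,y,x,y,x,y)$, $f(x,x,y,y,y,x)$, so an Olšák term exists if and only if $R$ contains a diagonal tuple $(c,c,c)$. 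These six generators are precisely the non-constant maps $\{1,2,3\} \to \{x,y\}$, so $R$ is invariant under the $S_3$-action permuting the three coordinates, and under the involution $x \leftrightarrow y$.

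The plan is to argue by contraposition: assuming $R$ contains no diagonal, I would construct a clone homomorphism from $\mathrm{Clo}(\alg A)$ to the clone of projections on a two-element set, which by Taylor's theorem contradicts the non-trivial Maltsev assumption. The $S_3$-symmetry of $R$ forces all three binary projections $R_{ij} \leq \alg F^2$ to coincide and to be symmetric, rigidly constraining the shape of $R$ in the absence of a diagonal. The main obstacle is constructing this clone homomorphism: one must assign, compatibly with composition, an index $\Phi(f) \in \{1,\ldots,n\}$ to each $n$-ary term $f$ using only the non-diagonal structure of $R$. I expect this to require an inductive argument on term complexity in which the full symmetry group acting on the six generators forces each term applied to diagonal-avoiding inputs to "prefer" a single coordinate, and the hardest part will be propagating this preference through composition without contradiction.
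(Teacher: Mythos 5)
The forward (``easy'') direction of your argument is fine: the three identities, together with idempotence, form a strong Maltsev condition, and your coordinate-by-coordinate check that no projection $\pi_i$ satisfies all three identities is exactly the standard verification of non-triviality. Note, however, that the paper does not prove this theorem at all --- it is quoted from \cite{WeakestIdempotent} --- so the only thing to assess is whether your argument stands on its own.

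It does not, because the hard direction is only a plan whose central step is missing. Your reduction to the relational statement is correct: an Ol\v{s}\'ak term exists iff the subalgebra $R\leq\alg F^3$ generated by the six non-constant tuples over $F_{\var V}(x,y)$ meets the diagonal, and $R$ does carry the $S_3$- and $x\leftrightarrow y$-symmetries you describe. But the contrapositive step --- ``if $R$ avoids the diagonal, build a clone homomorphism $\Phi$ from $\mathrm{Clo}(\alg A)$ onto the projection clone'' --- is precisely the entire content of the theorem, and you give no construction of $\Phi$, no definition of the ``preferred coordinate'' of a term, and no argument that such a preference is consistent under composition. For finite idempotent algebras one can sometimes extract such a homomorphism from diagonal-avoidance using absorption or Ramsey-type finiteness arguments, but this theorem is stated for \emph{general} idempotent algebras, where those tools are unavailable; there is no known proof along the route you sketch. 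The actual proof in \cite{WeakestIdempotent} runs in the opposite direction: it starts from a Taylor term (guaranteed by the non-trivial Maltsev condition) and \emph{explicitly constructs} the $6$-ary term by iterated self-composition of the Taylor term, first arranging each of the three required equalities separately and then merging them. As written, your proposal establishes only the easy implication and leaves the substantive one as an unproved conjecture about the existence of $\Phi$.
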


The finite counterexamples to strong Maltsev conditions for
$$\var M_{\{1,5\}}, \var M_{\{1,2,5\}},\var M_{\{1,4,5\}},
\var M_{\{1,2,4,5\}}$$
work as counterexamples for the general case, so the remaining
question is the following.

\begin{question}
  \label{main-question}
  Is there a strong Maltsev condition that is equivalent to congruence
  meet-semidistributivity?
  %implied by any
  %idempotent Maltsev condition that is not satisfied in
  %any one-dimensional vector field?
\end{question}

\subsection{Congruence meet-semidistributivity}

By $\Con(\alg A)$ we denote the lattice of congruences of $\alg A$.
A variety $\var V$ is said to be congruence meet-semidistributive
(shortly $SD(\wedge)$) if
for any $\alg A\in\var V$, and any three congruences
$\alpha,\beta,\gamma\in\Con(\alg A)$ such that
$$
\alpha\wedge\gamma = \beta\wedge\gamma,
$$
we have
$$
\alpha\wedge\gamma = \beta\wedge\gamma = (\alpha\vee\beta)\wedge\gamma.
$$

This property has many equivalent definitions,
see Theorem~8.1 in~\cite{CongruenceLattices}, we mention some
of them.
\begin{theorem}
  Let $\var V$ be a variety. The following are equivalent.
  \begin{itemize}
  \item $\var V$ is a congruence meet-semidistributive variety.
  \item No member of $\var V$ has a non-trivial abelian congruence.
  \item $[\alpha,\beta]=\alpha\wedge\beta$ for all
    $\alpha,\beta\in\Con(\alg A)$ and all $\alg A\in\var V$, where
    $[\alpha,\beta]$ denotes the commutator of congruences.
  \item The diamond lattice $M_3$ is not embeddable in $\Con(\alg A)$
    for any $\alg A\in \var V$,
  \item $\var V$ satisfies an idempotent Maltsev condition that fails
    in any finite one-dimensional vector space over a non-trivial
    field (equivalently in any module).
  \end{itemize}
\end{theorem}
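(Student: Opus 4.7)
The plan is to establish the equivalences by going around a cycle, drawing on standard commutator theory and tame congruence theory rather than proving everything from scratch. The cheapest direction is $SD(\wedge) \Leftrightarrow M_3 \text{ not embeddable}$: if $M_3$ embeds into some $\Con(\alg A)$ via atoms $\alpha,\beta,\gamma$, then $\alpha\wedge\gamma = \beta\wedge\gamma = 0$ yet $(\alpha\vee\beta)\wedge\gamma = \gamma \neq 0$, which immediately contradicts $SD(\wedge)$. The converse uses the standard lattice-theoretic fact that in a $\Con(\alg A)$ a failure of $SD(\wedge)$ together with the modular-style interval manipulations available in congruence lattices lets one locate an $M_3$ sublattice; for varieties this is cleaner because of closure under subalgebras and products.

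Next I would tie the commutator conditions together. Always $[\alpha,\beta] \leq \alpha\wedge\beta$, so the bullet ``$[\alpha,\beta] = \alpha\wedge\beta$'' says the commutator is as large as possible, which clearly implies ``no non-trivial abelian congruence'' (take $\alpha=\beta$: the condition $[\alpha,\alpha]=\alpha$ forces $\alpha$ non-abelian unless $\alpha=0$). For the converses I would invoke the machinery in \cite{CongruenceLattices}: a failure of $SD(\wedge)$ produces, via the term-condition commutator, either a non-trivial abelian congruence or a place where $[\alpha,\beta] < \alpha\wedge\beta$. The crucial ingredient is that in an $SD(\wedge)$ variety every prime quotient is of type \textbf{3}, \textbf{4}, or \textbf{5} (the non-abelian types in TCT), and these force the commutator to match the meet.

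The equivalence with the last bullet (existence of an idempotent Maltsev condition that fails in every module) is the deepest. One direction is easy: any idempotent Maltsev condition satisfied by $\var V$ must be satisfied by the term reducts of each $\alg A\in\var V$, and if $\var V$ has a non-trivial abelian congruence then a module is interpretable into $\var V$ (by the standard abelian congruence $\Rightarrow$ module structure argument), so any Maltsev condition $\var V$ satisfies must also hold in that module. The other direction is the content of Hobby--McKenzie / Kearnes--Kiss style results: one must actually exhibit such a Maltsev condition, e.g.\ the Kearnes--Kiss congruence identity characterization or the WNU-based conditions specialized to $SD(\wedge)$; this is where the bulk of the work lies and where I would simply cite Theorem~8.1 of \cite{CongruenceLattices}.

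The main obstacle is precisely this last equivalence, since it is not purely lattice-theoretic or commutator-theoretic but requires producing an \emph{explicit} idempotent Maltsev condition and verifying it both holds in every $SD(\wedge)$ variety and fails in every non-trivial module. Given the stated purpose of the present paper is exactly to simplify such a Maltsev characterization, I would treat this theorem as a compilation of known equivalences and defer the hardest implication to the cited source, using the clean lattice and commutator arguments above only to connect the remaining four bullets among themselves.
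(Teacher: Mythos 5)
The paper gives no proof of this theorem at all: it is quoted verbatim as (part of) Theorem~8.1 of \cite{CongruenceLattices}, so your decision to treat it as a compilation of known equivalences and defer to that source is exactly what the paper does. To that extent the proposal is acceptable. The genuinely correct easy pieces of your sketch are: an embedded $M_3$ (with atoms $\alpha,\beta,\gamma$ over bottom $\delta$) gives $\alpha\wedge\gamma=\beta\wedge\gamma=\delta$ while $(\alpha\vee\beta)\wedge\gamma=\gamma>\delta$, contradicting $SD(\wedge)$; and $[\alpha,\alpha]=\alpha$ for all $\alpha$ rules out non-trivial abelian congruences since $[\alpha,\beta]\le\alpha\wedge\beta$ always.

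However, several steps you present as routine are not sound as stated, and you should not leave the impression that they are. First, there is no ``standard lattice-theoretic fact'' that a failure of $SD(\wedge)$ in a lattice yields an $M_3$ sublattice: the forbidden-sublattice characterizations of (meet-)semidistributive lattices require excluding several lattices besides $M_3$, so a lattice can omit $M_3$ and still fail $SD(\wedge)$. The implication from ``no $M_3$ in any $\Con(\alg A)$'' to meet-semidistributivity holds only because these are congruence lattices of all algebras in a variety, and it is one of the hard results of \cite{CongruenceLattices}, not a lattice computation. Second, your appeal to TCT (every prime quotient of type \textbf{3}, \textbf{4}, or \textbf{5}) is only meaningful for locally finite varieties, whereas the theorem concerns arbitrary varieties; this is precisely the setting the paper emphasizes, so that route cannot close the commutator equivalences here. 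Third, the claim that a non-trivial abelian congruence makes a module interpretable into $\var V$ is not a standard fact outside congruence modular varieties (Herrmann's fundamental theorem of abelian algebras requires modularity); in the general setting, relating abelianness to modules and to idempotent Maltsev conditions is again the deep content of the cited source. None of this invalidates your overall plan of citing Theorem~8.1 of \cite{CongruenceLattices}, but the intermediate arguments you offer as independent justification would not survive scrutiny and should either be removed or explicitly attributed to that reference.
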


In this paper we are going to study the Maltsev conditions satisfied
by every $SD(\wedge)$ variety. Not only is it not known whether
there is a strong Maltsev condition characterizing the $SD(\wedge)$
varieties, but the known Maltsev conditions for $SD(\wedge)$ were quite
complicated. Probably the simplest Maltsev condition for $SD(\wedge)$
which was available before this work is the following one.

Let $[n]$ denote the set $\{1,2,\ldots,n\}$.
Consider some $n$, and a self-inverse bijection
$\varphi\colon[2n]\to[2n]$ without fixed points, such that whenever
$i < j < \varphi(i)$, then also $i < \varphi(j) < \varphi(i)$. Such a
bijection corresponds to a proper bracketing sequence with $n$
opening and $n$ closing brackets. Then the \emph{bracket terms}
are ternary terms $b_1,\ldots,b_{2n}$ satisfying the following
identities
$$
b_1(x,y,z) = x, \quad b_{2n}(x,y,z) = z,
$$$$
b_{2i}(y,x,x) = b_{2i-1}(y,x,x), \quad b_{2i}(x,x,y) = b_{2i+1}(x,x,y),
$$$$
b_{i}(x,y,x) = b_{\varphi(i)}(x,y,x),
$$
for any $i$ where it makes sense.

\begin{theorem}[Theorem 1 in \cite{BracketTerms}]
  \label{bracket-terms}
  A variety $\var V$ satisfies the $SD(\wedge)$ property if and only
  if it has some bracket terms.
\end{theorem}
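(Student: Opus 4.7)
My plan is to prove the two implications of Theorem~\ref{bracket-terms} separately. For the easy direction ``bracket terms imply $SD(\wedge)$'' I would work directly with the congruence lattice. Given congruences $\alpha,\beta,\gamma$ of some $\alg A\in\var V$ with $\alpha\wedge\gamma = \beta\wedge\gamma$ and a pair $(a,b)\in(\alpha\vee\beta)\wedge\gamma$, pick an alternating $\alpha$-$\beta$ chain $a=c_0,c_1,\ldots,c_m=b$. I expect the elements produced by evaluating the $b_i$'s at triples built from this chain and from $(a,b,a)$ to trace a path along which consecutive steps stay inside $\alpha\wedge\gamma$. The gluing identities $b_{2i}(y,x,x)=b_{2i-1}(y,x,x)$ and $b_{2i}(x,x,y)=b_{2i+1}(x,x,y)$ allow sliding between consecutive bracket indices, and the matching identity $b_i(x,y,x)=b_{\varphi(i)}(x,y,x)$ allows jumps between indices paired by $\varphi$. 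The non-crossing structure of $\varphi$ is precisely what assembles these local moves into a proof that $(a,b)\in\alpha\wedge\gamma$, presumably by induction on the depth of the nesting.

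For the forward direction ``$SD(\wedge)$ implies bracket terms'' I would work in the free algebra $\alg F = \alg F_{\var V}(x,y,z)$ using the principal congruences $\alpha$, $\beta$, $\gamma$ generated respectively by $(x,y)$, $(y,z)$ and $(x,z)$. Since $x$ and $z$ are related by $(\alpha\vee\beta)\wedge\gamma$, $SD(\wedge)$ applied iteratively to auxiliary congruences built from intermediate ternary terms should force agreements among certain term values at the triples $(y,x,x)$, $(x,x,y)$ and $(x,y,x)$. Collecting and relabeling these terms produces the sequence $b_1,\ldots,b_{2n}$, with $\varphi(i)=j$ exactly when the terms $b_i$ and $b_j$ are forced to agree at $(x,y,x)$.

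The main obstacle is showing that the matching $\varphi$ so produced can always be taken to be a proper (non-crossing) bracketing; a priori the $SD(\wedge)$ derivation could yield crossing identifications, which do not correspond to any balanced bracket sequence. I would overcome this by an inductive construction: at each stage apply $SD(\wedge)$ to pair the outermost indices first, then recurse on the nested subchains that appear between matched pairs, verifying at each step that the gluing identities between consecutive indices are preserved. This recursive bracketing scheme, rather than any single algebraic identity, is where I expect the real technical difficulty of the proof to lie.
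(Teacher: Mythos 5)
First, a point of comparison: this paper does not prove Theorem~\ref{bracket-terms} at all --- it is imported verbatim from \cite{BracketTerms} and used as a black box --- so your proposal can only be judged on its own merits, and on those merits both directions have genuine gaps. For ``bracket terms $\Rightarrow SD(\wedge)$'', your chain computation never actually invokes the hypothesis $\alpha\wedge\gamma=\beta\wedge\gamma$, and no one-pass J\'onsson-style argument of this kind is available here: unlike congruence distributivity, where $d_i(x,y,x)=x$ anchors every link of the chain, the values $b_i(x,y,x)$ are unconstrained except for matching in $\varphi$-pairs, and it is unclear why consecutive derived elements should be $\gamma$-related at all, let alone land in $\alpha\wedge\gamma$. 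The route that actually works --- and the one this paper itself uses for its $(m_1+m_2)$-terms --- is to show the identities fail in every nontrivial module and invoke the listed equivalence ``$SD(\wedge)$ iff $\var V$ satisfies an idempotent Maltsev condition that fails in modules''. Concretely, writing $b_i(x,y,z)=p_ix+q_iy+r_iz$ with $p_i+q_i+r_i=1$, the identities force $p_{2i}=p_{2i-1}$, $r_{2i}=r_{2i+1}$ and $q_i=q_{\varphi(i)}$; since $\varphi$ reverses parity, $\sum_i(-1)^iq_i=0$ and $\sum_i(-1)^ip_i=0$, while $\sum_i(-1)^ir_i$ telescopes to $r_{2n}-r_1=1$, so summing the idempotence equations with alternating signs gives $0=1$.

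For ``$SD(\wedge)\Rightarrow$ bracket terms'', the plan to apply $SD(\wedge)$ to $\alpha=\mathrm{Cg}(x,y)$, $\beta=\mathrm{Cg}(y,z)$, $\gamma=\mathrm{Cg}(x,z)$ in $\alg F_{\var V}(x,y,z)$ does not get off the ground: $SD(\wedge)$ is a quasi-identity, its premise $\alpha\wedge\gamma=\beta\wedge\gamma$ is simply false for these principal congruences in general, so the Wille--Pixley free-algebra unwinding cannot be applied directly. A correct proof must first replace $SD(\wedge)$ by an equivalent family of unconditional congruence inclusions --- the standard device is the iteration $\alpha_0=\alpha$, $\beta_0=\beta$, $\alpha_{k+1}=\alpha\vee(\gamma\wedge\beta_k)$, $\beta_{k+1}=\beta\vee(\gamma\wedge\alpha_k)$, with $SD(\wedge)$ equivalent to $\gamma\wedge(\alpha\vee\beta)\le\alpha_n$ for some $n$ --- and then unwind \emph{those} inclusions in the free algebra; it is the depth of this iteration that produces the nesting of $\varphi$, so the non-crossing structure falls out of the construction rather than needing to be repaired afterwards. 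Your sketch correctly identifies the non-crossing matching as the crux but offers only the intention to ``pair outermost indices first and recurse'' with no mechanism for why $SD(\wedge)$ yields any pairing at all; that is the actual content of the hard direction, and it is missing.
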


\subsection{The new terms}

In this paper we define $(m_1+m_2)$-terms as a triple of idempotent terms
$(f, g_1, g_2)$, where $g_1$ is $m_1$-ary, $g_2$ is $m_2$-ary, $f$ is
$(m_1+m_2)$-ary, and they satisfy the identities
\begin{align*}
f(x,x,\ldots,x,\indexed y i,x,\ldots,x) &= g_1(x,x,\ldots,x,\indexed y i,x,\ldots,x)
\hbox{ for any $i=1,\ldots,m_1$},\cr
f(x,x,\ldots,x,\indexed y{n_1+i},x,\ldots,x) &= g_2(x,x,\ldots,x,\indexed y i,x,\ldots,x)
\hbox{ for any $i=1,\ldots,m_2$}.\cr
\end{align*}

We prove the following theorem.
\begin{theorem}
  A variety $\var V$ is congruence meet-semidistributive if and only if it has
  $(3+m)$-terms for some $m$.
\end{theorem}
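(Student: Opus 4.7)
The proof splits into two implications. For the easier direction, assume $\var{V}$ has $(3+m)$-terms. My approach is to recognize this as an idempotent Maltsev condition that fails in every nontrivial module, and then invoke the last item of the characterization theorem quoted above. To verify the module failure, note that in a module over a nontrivial ring $R$ every idempotent term is a linear combination with coefficients summing to $1_R$. Writing $g_1 = \sum_{i=1}^{3}\alpha_i x_i$, $g_2 = \sum_{j=1}^{m}\beta_j u_j$, and $f = \sum c_i x_i + \sum d_j u_j$, the defining identities of $(3+m)$-terms, read off one variable at a time, force $c_i = \alpha_i$ and $d_j = \beta_j$. Idempotence of $f$ then demands $\sum\alpha_i + \sum\beta_j = 1$, whereas each of the two sums is separately $1$; hence $2 = 1$ in $R$, making $R$ trivial. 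Thus $\var{V}$ satisfies an idempotent Maltsev condition failing in every nontrivial module, so $\var{V}$ is $SD(\wedge)$.

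For the harder direction, assume $\var{V}$ is $SD(\wedge)$. I would apply Theorem~\ref{bracket-terms} to extract bracket terms $b_1, \ldots, b_{2n}$ for some $n$, and then construct $(3+m)$-terms from them with $m$ depending on $n$. The plan is to take $g_1$ to be a specific bracket term (for instance $b_2$) and $g_2$ to be an $m$-ary term built from iterated compositions of the $b_i$'s, arranged so that its ``one-$y$'' evaluations $g_2(x,\ldots,y,\ldots,x)$ trace the chain of values linked by the bracket identities. The outer term $f$ would then be a composition that feeds $x_1,x_2,x_3$ and the auxiliary variables $u_1,\ldots,u_m$ into appropriate slots of a nested composition of the $b_i$'s, so that substituting $y$ into any single position reduces, via the bracket identities, to the required $g_1$ or $g_2$ evaluation.

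The main technical obstacle is the combinatorial coordination in the hard direction: the bracket structure is a nested tree indexed by the involution $\varphi$, while the auxiliary variables of $f$ are a flat list of length $m$. Matching these up so that the three flavors of bracket identities ($b_{2i}(y,x,x)=b_{2i-1}(y,x,x)$, $b_{2i}(x,x,y)=b_{2i+1}(x,x,y)$, and $b_i(x,y,x)=b_{\varphi(i)}(x,y,x)$) each trigger the right cascade of simplifications when $y$ is plugged at a given position requires careful bookkeeping. I expect the construction to be defined recursively along the bracket tree, with verification of the $(3+m)$-term identities proceeding by induction on its depth and $m$ polynomial in $n$; the endpoint identities $b_1(x,y,z)=x$ and $b_{2n}(x,y,z)=z$ should anchor the base cases that trim the composition back to $g_1$ or $g_2$ after the required cascade.
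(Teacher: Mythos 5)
The backward implication in your proposal is correct and is essentially the paper's own argument: read the terms as affine combinations in a module, match coefficients variable by variable, and derive $2=1$ from idempotence, so the condition fails in every nontrivial module and the quoted characterization of $SD(\wedge)$ applies.

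The forward implication, however, is a plan rather than a proof, and the plan as stated will not go through. A direct recursive composition of the bracket terms $b_1,\ldots,b_{2n}$ naturally yields terms in which \emph{both} index blocks of $f$ have size governed by $n$: this is exactly what the paper's Proposition~\ref{sd-meet-weakest} achieves, producing $n\times(n+1)\times 3$-terms (a grid-indexed generalization in which $f$ splits into $n+1$ groups and $g$ into $n$ groups). The entire difficulty of the theorem is collapsing the block sizes down to a constant independent of $n$, and this is not done by bookkeeping over the bracket tree. The paper needs two genuinely non-syntactic reductions carried out in free algebras: first, Theorem~\ref{times-decrease} shows that $n\times(n+1)\times m$-terms can be traded for $(n-1)\times n\times m'$-terms, via an argument about expansions in a free semiring on the "one-variable minors" $b_{i,j,k}$ (proving $n-1\sim n$ modulo the congruence generated by the row and column sums being $1$); iterating gives $(m+m)$-terms. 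Second, Section~\ref{3-plus-m} reduces $(m+m)$-terms to $(3+m')$-terms using the pendant/zipping machinery built around the absorption-like Lemma~\ref{h-absorbtion} for $h(x,y)=f(x\ldots x,y\ldots y)$. Your sketch contains no substitute for either step: in particular, nothing in it explains why the first block of $f$ can be made ternary rather than of size comparable to $n$, which is precisely the content that separates this theorem from the (easy) derivation of the grid-indexed terms. To repair the proof you would need either to supply these reductions or to exhibit an explicit ternary $g_1$ and a $(3+m)$-ary $f$ with all identities verified, neither of which is present.
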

Checking the backward implication is easy. For a contradiction, assume
that the identities of $(m_1+m_2)$-terms were satisfied in modules. That
means that $f$, $g_1$, $g_2$ are represented by linear
combinations. In particular, let
$$
f(x_1,x_2,\ldots,x_{m_1+m_2}) = a_1x_1+\cdot+a_{m_1+m_2}x_{m_1+m_2}
$$$$
g_1(x_1,x_2,\ldots,x_{m_1}) = b_1x_1+\cdot+b_{m_1}x_{m_1},
\quad g_2(x_1,x_2,\ldots,x_{m_2}) = c_1x_1+\cdot+c_{m_2}x_{m_2},
$$
By plugging $x=0, y\neq0$ into the identities for $f$ and $g_1$, we get
$a_i=b_i$ for $i=1,\ldots,m_1$. If we make the same substitution
in the second identity, we get $a_{m_1+i}=c_i$ for
$i=1,\ldots,m_2$. Moreover, idempotency identity enforces
$$
\sum_{i=1}^{m_1+m_2} a_i = \sum_{i=1}^{m_1} b_i = \sum_{i=1}^{m_2} c_i
= 1.
$$
Therefore we get
$$
1 = \sum_{i=1}^{m_1+m_2} a_i = \sum_{i=1}^{m_1} b_i + \sum_{i=1}^{m_2} c_i
= 2,
$$
which contradicts that our field was non-trivial.
Thus, we proved the backward implication.

To prove the forward implication, we take a detour through a
generalized version of $(m_1+m_2)$-terms. Given $n,m$, we define
$n\times(n+1)\times m$-terms as follows.

Let $i$ have values from $1$ to $n$, $j$ have values from
$1$ to $n+1$, and $k$ have values from $1$ to $m$.
The $n\times(n+1)\times m$-terms are idempotent $(n+1)m$-ary terms
$f_i$ (variables are indexed by pairs $(j,k)$) and idempotent $nm$-ary terms
$g_j$ (variables are indexed by pairs $(i,k)$) such that for every
$i,j,k$ they satisfy the equation
$$
f_i(x,x,\ldots,x,\indexed y{(j,k)},x,\ldots,x)
= g_j(x,x,\ldots,x,\indexed y{(i,k)},x,\ldots,x).
$$

By definition, $1\times2\times m$-terms are equivalent to the
$(m+m)$-terms. On the other hand, for large enough $n,m$, it is simple
to derive the $n\times(n+1)\times m$-terms from another Maltsev
condition not satisfiable in vector spaces.

\begin{proposition}
  \label{sd-meet-weakest}
  Let $\var V$ be a $SD(\wedge)$ variety. Then $\var V$ has
  $n\times(n+1)\times m$-terms for some $n,m$.
\end{proposition}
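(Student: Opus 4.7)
The plan is to derive the $n\times(n+1)\times m$-terms from the bracket terms of Theorem~\ref{bracket-terms}, whose existence in $\var V$ follows from $SD(\wedge)$. Fix bracket terms $b_1,\ldots,b_{2N}$ for $\var V$ with matching bijection $\varphi$. I would pick $n$ and $m$ as explicit functions of $N$, large enough that the linear bracket sequence can encode all the partial substitutions required by the $n\times(n+1)\times m$ identities.

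Each $f_i$ and each $g_j$ would be built as a single nested composition of the $b_\ell$'s. The arguments of $f_i$ (indexed by pairs $(j,k)\in[n+1]\times[m]$) and of $g_j$ (indexed by pairs $(i,k)\in[n]\times[m]$) would be placed at carefully chosen positions inside the nested ternary expression, with the shape of the nesting mirroring the bracketing described by $\varphi$. Idempotence of $f_i$ and $g_j$ is then inherited from idempotence of the $b_\ell$'s, which itself follows from $b_1(x,y,z)=x$, $b_{2N}(x,y,z)=z$, and the adjacency identities.

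The identity
$$f_i(x,\ldots,y_{(j,k)},\ldots,x)=g_j(x,\ldots,y_{(i,k)},\ldots,x)$$
would be verified by substituting $x$ in every argument except the distinguished one and collapsing both sides using the bracket identities. The adjacency identities $b_{2\ell}(y,x,x)=b_{2\ell-1}(y,x,x)$ and $b_{2\ell}(x,x,y)=b_{2\ell+1}(x,x,y)$ allow the lone $y$ to propagate between consecutive positions in the bracket sequence, while the matching identity $b_\ell(x,y,x)=b_{\varphi(\ell)}(x,y,x)$ lets it jump between matched brackets. An induction on the distance the $y$ has to travel then reduces both sides to a common binary term in $x,y$.

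The main obstacle is the combinatorial design of the indexing. One has to fold the one-dimensional bracket sequence into the two-dimensional $n\times(n+1)$ grid in such a way that simultaneously (a) a single term $f_i$ realizes the required binary reduction for every $(j,k)$, and symmetrically a single $g_j$ realizes the required reduction for every $(i,k)$, and (b) the two sides of the identity agree for every triple $(i,j,k)$. This folding is what forces both $n$ and $m$ to grow with $N$ and carries most of the technical content of the argument; it is also the point at which the failure of the $n\times(n+1)\times m$-terms in vector spaces (paralleling the $(m_1+m_2)$-term calculation given above) is mirrored by the failure of the bracket identities in vector spaces.
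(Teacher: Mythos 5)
Your starting point is the right one -- the paper's proof also begins by invoking Theorem~\ref{bracket-terms} to obtain bracket terms $b_1,\ldots,b_{2N}$ with matching $\varphi$ -- but the proposal stops exactly where the proof has to begin. The ``combinatorial design of the indexing,'' which you defer as ``the main obstacle'' carrying ``most of the technical content,'' is the entire content of the argument, and without it nothing has been proved: you have not specified the nesting, the positions of the distinguished variables, or the induction, so there is no way to check that the identities actually hold. As written, this is a description of what a proof would need to accomplish rather than a proof.

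Moreover, the route you envision is considerably more complicated than what is needed, and some of its guiding intuitions are off. No nested compositions, no ``folding'' of the bracket sequence into a two-dimensional grid, and no induction on how far the lone $y$ must ``propagate'' are required; in particular $m$ does not grow with $N$. The construction is direct: take $m=3$ and $n=N$, split $[2N]$ into odd and even indices, and let $\psi(i)=\varphi(2i-1)/2$ and $\psi'(i)=(\varphi(2i)+1)/2$ encode the matching between the two halves (this uses that $\varphi(i)$ and $i$ have opposite parity for a proper bracketing). Then define each $g_j$ to be $b_{2j-1}$ applied to the three variables at positions $(j,1)$, $(\psi(j),2)$, $(j-1,3)$, and each $f_i$ to be $b_{2i}$ applied to the variables at positions $(i,1)$, $(\psi'(i),2)$, $(i+1,3)$, all other formal variables being dummies (with the boundary cases $g_1$, $g_{n+1}$ handled by $b_1(x,y,z)=x$ and $b_{2N}(x,y,z)=z$). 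Each identity $f_i(x,\ldots,y_{(j,k)},\ldots,x)=g_j(x,\ldots,y_{(i,k)},\ldots,x)$ is then either trivially $x=x$ (when the distinguished position is a dummy on both sides -- one checks the active positions coincide on the two sides, e.g.\ $j=\psi'(i)$ iff $i=\psi(j)$) or is literally one of the three bracket identities, with $k=1,2,3$ corresponding respectively to $b_{2i}(y,x,x)=b_{2i-1}(y,x,x)$, $b_{\ell}(x,y,x)=b_{\varphi(\ell)}(x,y,x)$, and $b_{2i}(x,x,y)=b_{2i+1}(x,x,y)$. You should supply this (or some equally explicit) assignment and verification; until you do, the proposal has a genuine gap.
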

\begin{proof}
  By Theorem~\ref{bracket-terms}, we may assume that there are bracket
  terms $b_1,\ldots,b_{2n}$ corresponding to a bijection
  $\varphi\colon [2n]\to[2n]$. Notice that since $\varphi$ forms a proper
  bracketing, $\varphi(i)$ has a different parity than $i$ for any
  $i$. Let $\psi(i) = \varphi(2i-1)/2$ and $\psi'(i) =
  (\varphi(2i)+1)/2$. In other words, we splited $[2n]$ to odd and even
  part and labeled them as $[n]$; then $\psi$ corresponds to the
  mapping $\varphi$ odd${}\to{}$even, and $\psi'$ to its inverse.
  We construct $n\times (n+1)\times 3$-terms as follows. We set
  \begin{align*}
  g_1(x_{1,1},\ldots,x_{\varphi(1),2},\ldots) &= x_{1,1} = b_1(x_{1,1},x_{\psi(1),2},x),\cr
  g_i(\ldots,x_{i,1},\ldots,x_{\psi(i),2},\ldots,x_{i-1,3},\ldots)
  &= b_{2i-1}(x_{i,1},x_{\psi(i),2},x_{i-1,3})\cr
  g_{n+1}(\ldots,x_{n,3}) &= x_{n,3} \cr
  f_{i}(\ldots,x_{i,1},\ldots,x_{\psi'(i),2},\ldots,x_{i+1,3},\ldots)
  &= b_{2i}(x_{i,1},x_{\psi'(i),2},x_{i+1,3})\cr
  \end{align*}
  All the $n\times(n+1)\times3$-identities follows directly from the
  bracket identities.
\end{proof}

\subsection{Outline}

The rest of the proof is divided into two sections. In
Section~\ref{semirings} we show that in $n\times(n+1)\times m$-terms,
we can decrease $n$ by one increasing $m$ enough. It follows that any
$SD(\wedge)$ variety has $(m+m)$-terms a large enough
$m$. In Section~\ref{3-plus-m}, we improve that result to
$(3+m)$-terms. Section~\ref{counterexample} then provides a few
counterexamples showing that requesting $(2+m)$-terms would be too
strong. Finally, in Section~\ref{further-work} we discuss remaining
open questions.

\section{Simplifying $n\times(n+1)\times m$-terms}
\label{semirings}

\subsection{Semirings}

We will need some basic facts about semirings for our first proof.

Semiring is a general algebra $\alg A=(A, +, \cdot, 0, 1)$ where
$(A,+,0)$ is a commutative monoid, $(A, \cdot, 1)$ is a monoid, zero
absorbes everything in multiplication ($0\cdot x = x\cdot 0 = 0$), and
distributive laws are satisfied, that is,
$a\cdot(b+c)=a\cdot b+a\cdot c$ and
$(a+b)\cdot c = a\cdot c + b\cdot c$. As usual, the binary
multiplication operation $\cdot$ is often ommited writing $ab$ instead
of $a\cdot b$.

Let $\alphabet A$ be an alphabet. The elements of the free monoid
$\alphabet A^*$ generated by $\alphabet A$ are represented by finite words in
the alphabet, multiplication concatenates the words and the constant 1
corresponds to the empty word. Finally, the elements of the free
semiring generated by $\alphabet A$ are represented as finite multisets
(formal sums) of words in $\alphabet A^*$. The addition in the free semiring
is defined as sums (disjoint unions) of the corresponding multisets,
and the product $p\cdot q$ is defined as piecewise product of the
monomials, that is $\{u\cdot v : u\in p, v\in q\}$.

Let $\alg F$ be the free semiring generated by some alphabet $\alphabet A$,
and $E$ be a set of equations of the form
$e_1= 1, e_2= 1, e_3= 1, \ldots$ where
$e_i\in\alg F$. We are going to provide a description of the conguence
on $\alg F$ generated by $E$.

Take a monomial $u\in\alphabet A^*$. By a \d{single expansion} of $u$ we
mean any element of $\alg F$ of the form $ve_iw$ where $vw=u$.
A \d{single expansion} on a general element of $\alg F$ is then
defined as performing a single expansion on one of its summands.
Finally, we say that $p$ is an \d{expansion} of $q$ if we can obtain
$p$ by performing consecutive single expansion steps on $q$.

\begin{proposition}
\label{common-expansion}
For any pair $(p,q)$ of elements in $\alg F$, these two elements are
congruent modulo the congruence generated by $E$ if and only if there
is a common expansion $r$ of both $p$ and $q$.
\end{proposition}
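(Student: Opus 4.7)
The plan is to prove the proposition via a Church--Rosser (confluence) argument on the single-step expansion relation. Write $p \Rightarrow q$ when $q$ is obtained from $p$ by a single expansion step, and $p \Rightarrow^* q$ for its reflexive transitive closure, so $p \Rightarrow^* q$ exactly when $q$ is an expansion of $p$ in the sense of the excerpt. Let $\approx$ denote the relation ``$p$ and $q$ have a common expansion''. The proposition is the assertion that $\approx$ coincides with the congruence $\sim$ generated by $E$.

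The ``if'' direction is routine: a single expansion $u \mapsto v e_i w$ (with $vw = u$) is an instance of $e_i = 1$ applied inside the polynomial context $v \cdot x \cdot w$, so $\Rightarrow \subseteq \sim$ and therefore $\approx \subseteq \sim$. For the reverse inclusion I would verify that $\approx$ is a congruence of $\alg F$ containing every pair $(e_i, 1)$ and invoke the minimality of $\sim$. Containment is immediate since $1 \Rightarrow e_i$; reflexivity and symmetry of $\approx$ are clear. Compatibility with $+$ is straightforward: an expansion chain $p \Rightarrow^* r$ can be replayed verbatim inside $p + p'$, since each step rewrites a single summand and leaves the rest untouched, giving $p + p' \Rightarrow^* r + r'$. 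Compatibility with $\cdot$ is similar, except that a single step rewriting a summand $u$ of $p$ becomes one parallel step inside each product $u \cdot u'_\ell$ as $u'_\ell$ ranges over the summands of $p'$, yielding $p \cdot p' \Rightarrow^* r \cdot p' \Rightarrow^* r \cdot r'$.

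The crux is transitivity of $\approx$, which reduces to confluence of $\Rightarrow^*$: whenever $q \Rightarrow^* r_1$ and $q \Rightarrow^* r_2$, there exists $r$ with $r_1 \Rightarrow^* r$ and $r_2 \Rightarrow^* r$. I would derive this from a multi-step local diamond: given two single expansions $q \Rightarrow p_1$ and $q \Rightarrow p_2$, I construct a common element in $\Rightarrow^*$. If the two steps act on distinct summands of $q$ they commute. If they act on the same monomial summand $u$ at distinct positions, writing $u = u_1 u_2 u_3$ with the insertions lying between $u_1$--$u_2$ and $u_2$--$u_3$, both $p_1$ and $p_2$ can be driven to $u_1 e_{i_1} u_2 e_{i_2} u_3$ by inserting the other $e_i$ into each summand of the first expansion. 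If they act at the same position, splitting $u = vw$, then $p_1 = v e_{i_1} w$ and $p_2 = v e_{i_2} w$ both reduce to $v e_{i_2} e_{i_1} w$: from $p_1$ by inserting $e_{i_2}$ into each summand just before the $e_{i_1}$-piece, and from $p_2$ by inserting $e_{i_1}$ into each summand just after the $e_{i_2}$-piece. Iterating this diamond via the standard strip-lemma argument yields full confluence, and the Church--Rosser conclusion then follows by straightening any convertibility zig-zag from the top down.

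The main obstacle I expect is precisely the same-position subcase: a one-step expansion produces a sum of many monomials, so the diamond must be closed by parallel expansions on every summand coming from $e_{i_1}$ (respectively $e_{i_2}$), and one cannot rely on a one-step diamond. Tracking these parallel expansions inside the multiset structure of $\alg F$, and then verifying that iteration by the strip lemma closes cleanly, is the technical heart of the proof.
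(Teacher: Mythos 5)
Your overall strategy matches the paper's: define $\approx$ as ``has a common expansion,'' observe it contains each $(e_i,1)$, verify it is a congruence, and reduce transitivity of $\approx$ to confluence of the expansion relation. The compatibility arguments for $+$ and $\cdot$ are also essentially those in the paper. However, there is a genuine gap exactly at the point you flag as ``the technical heart,'' and the way you propose to close it does not work as stated.

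The problem is that your local diamond is a \emph{multi-step} local confluence statement: from $q \Rightarrow p_1$ and $q \Rightarrow p_2$ you reach a common reduct $r$ with $p_1 \Rightarrow^* r$ in $|e_{i_2}|$ steps and $p_2 \Rightarrow^* r$ in $|e_{i_1}|$ steps (since inserting $e_{i_2}$ after $v$ in $p_1 = v e_{i_1} w$ requires one $\Rightarrow$-step for each monomial of $e_{i_1}$). Multi-step local confluence does \emph{not} imply confluence without a termination hypothesis, and $\Rightarrow$ here is manifestly non-terminating. The ``standard strip-lemma argument'' requires closing the base diamond with at most one step on one side (or, more generally, a genuine one-step diamond for a relation sandwiched between $\Rightarrow$ and $\Rightarrow^*$); with many steps on both sides the induction on path length does not decrease and does not close. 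Your own final paragraph acknowledges that ``one cannot rely on a one-step diamond'' but then asserts that the strip lemma will nonetheless ``close cleanly,'' which is precisely what is not justified. The paper resolves this by \emph{promoting} the parallel expansion to a named relation $\preccurlyeq$ (perform a single expansion step on a subset of the summands), proving it compatible with multiplication (Claim~\ref{prec-mul-comp}) and, crucially, proving the genuine one-step diamond for $\preccurlyeq$ (Claim~\ref{prec-diamond}). Since $\Rightarrow \subseteq \preccurlyeq \subseteq \Rightarrow^*$ and $\preccurlyeq$ has the diamond property, a straightforward tiling of a $P\times Q$ grid of $\preccurlyeq$-steps gives confluence of $\Rightarrow^*$. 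Your construction of the common reduct $v e_{i_2} e_{i_1} w$ is in fact a single $\preccurlyeq$-step from each of $p_1$ and $p_2$; the missing idea is to name that relation, prove the diamond for it directly (including the case $r \preccurlyeq p$, $r \preccurlyeq q$ with $r$ a polynomial rather than a single step), and then tile---rather than trying to iterate a multi-step local diamond for $\Rightarrow$.
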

\begin{proof}
The backward implication is obvious: If $r$ is an expansion of $p$,
then $r$ is clearly congruent to $p$. Analogously, $r$ is congruent to
$q$, therefore $p$ is congruent to $q$. We are going to prove the
forward implication.

For $p,q\in\alg F$ we define a relation $p\sim q$ if there is a common
expansion of $p$ and $q$. Clearly each $e_i\sim 1$. To show that
$\sim$ includes the congruence generated by $E$, it remains to prove
that $\sim$ is a congruence. Symmetry and reflexivity is apparently
satisfied, so we have to prove that $\sim$ is transitive and compatible
with the operations. To do that, let us introduce some notation.

Let $p \leq q$ denote that $q$ is an expansion of $p$ and
let $p\preccurlyeq q$ denote that $q$ can be obtained by applying single
expansion steps on a subset of summands of $p$. So $p\preccurlyeq q$ is
stronger than $p\leq q$ but weaker that $q$ being a single expansion
of $p$.

These orderings are clearly closed under addition. In particular, if
$p=\sum_i^n p_i$, $q=\sum_i^n q_i$ and
$p_i\preccurlyeq q_i$, then $p\preccurlyeq q$.

\begin{claim}
\label{prec-mul-comp}
For any $p,q,r,s\in\alg F$ such that $p\preccurlyeq q$ we have
$rps\preccurlyeq rqs$.
\end{claim}
To verify that, let
$p=\sum_i^P p_i$, $q = \sum_i^P q_i$,
$r=\sum_i^R r_i$, $s=\sum_i^S s_i$,
where $p_i, r_i, s_i$ are monomials and $p_i \preccurlyeq q_i$.
Then
$$
rps = \sum_i^R\sum_j^P\sum_k^S r_ip_jk_k, \quad
rqs = \sum_i^R\sum_j^P\sum_k^S r_iq_js_k.
$$
Since $p_j\preccurlyeq q_j$,
we can write $p_j = u_jv_j$ so that $q_j = u_jx_jv_j$ where
$x_j\succcurlyeq 1$, that is, $x=1$ or $x$ one of the elements $e_i$.
So we can
write $r_ip_js_k = (r_iu_j)(v_js_k)$ and $r_iq_j = (r_iu_j)x_j(v_js_k)$.
Therefore $r_ip_js_k\preccurlyeq r_iq_js_k$ and
thus $rps\preccurlyeq rqs$.

\begin{claim}
\label{prec-diamond}
For any $p,q,r\in\alg F$ such that $r\preccurlyeq p$
and $r\preccurlyeq q$ there exists $s\in\alg F$ such that
$p\preccurlyeq s$ and $q\preccurlyeq s$.
\end{claim}
First, we prove the claim if $r$ is a monomial. So polynomials $p$, $q$
are constructed by inserting $p'$, $q'$ somewhere into $r$
respectively, where $p', q'\succcurlyeq 1$.
Without loss of generality, $q'$ is inserted at the
same position as $p'$ or later, so we can write $r=uvw$,
$p=up'vw$, $q=uvq'w$. Now we choose $s=up'vq'w$. By
Claim~\ref{prec-mul-comp} and $p', q'\succcurlyeq 1$ we get
the required
$$
p = (up'v)(w) \preccurlyeq (up'v)q'(w) = s, \quad
q = (u)(vq'w) \preccurlyeq (u)p'(vq'w) = s.
$$
For a general $r=\sum_i^n r_i$ where $r_i$ are monomials, we decompose
$p=\sum_i^n p_i$, $q=\sum_i^n q_i$ so that
$r_i\preccurlyeq p_i, q_i$. Therefore, we find elements $s_i$ such
that $s_i\succcurlyeq p_i, q_i$, and eventually
$s=\sum_i^n\succcurlyeq p, q$.

We are finally ready to prove the transitivity of $\sim$ and compatibility
with operations.
\begin{claim}
If $x,r,y\in\alg F$, $x\sim r$ and $r\sim y$, then $x\sim y$.
\end{claim}
By definition of $\sim$, there are $p,q\in\alg F$ such that
$x, r\leq p$ and $r,y\leq q$. We break
the expansion $r\leq p$ into finite number of single expansion steps
getting a sequence
$$
r = s_{0,0}\preccurlyeq s_{1,0} \preccurlyeq
\cdots \preccurlyeq s_{P,0} = p.
$$
Similarly, there is a sequence
$$
r = s_{0,0}\preccurlyeq s_{0,1} \preccurlyeq
\cdots \preccurlyeq s_{0,Q} = q.
$$
By repeated application of Claim~\ref{prec-diamond}, we fill in the
matrix
$(s_{i,j})\in \alg F^{P\times Q}$ in such a way that
$s_{i,j}\preccurlyeq s_{i+1,j}$ and
$s_{i,j}\preccurlyeq s_{i,j+1}$ where they are defined.
Eventually, we get $s=s_{P,Q}$ such that $s\geq p,q$. Therefore
$s\geq p\geq x$ and $s\geq q\geq y$, so $x\sim y$.

Compatibility of $\sim$ with addition and multiplication is
straightforward. For $p_1,q_1,p_2,q_2\in\alg F$ such that $p_1\sim q_1$,
and $p_2\sim q_2$, there are $r_1, r_2$ such that
$p_1,q_1 \leq r_1$ and $p_2,q_2 \leq r_2$. Thus
$p_1+p_2 \leq r_1+r_2$ and $q_1+q_2 \leq r_1+r_2$.
Therefore
$p_1+p_2\sim q_1+q_2$, so $\sim$ is compatible with addition.

Regarding multiplication, consider any $p,q,s\in\alg F$ such that
$p\sim q$. There is $r$ such that $p,q\leq r$.
By Claim~\ref{prec-mul-comp} and $p\leq r$, we get
$sp, sq\leq sr$ and $ps,qs\leq rs$. Therefore $sp\sim sq$ and
$ps\sim qs$.

This is sufficient for compatibility with multiplication: If
$p_1\sim q_1$ and $p_2\sim q_2$,
then $p_1p_2\sim q_1p_2\sim q_1q_2$, so $p_1p_2\sim q_1q_2$ by
transitivity.
\end{proof}

\subsection{Decreasing $n$}

\begin{theorem}
\label{times-decrease}
Let $\alg A$ be an idempotent algebra with
$n\times(n+1)\times m$-terms for
some $n>1,m>0$. Then there exists $m'$ such that $\alg A$ has
$(n-1)\times n\times m'$-terms.
\end{theorem}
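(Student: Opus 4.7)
My plan is to build the desired $(n-1)\times n\times m'$-terms by composing the given $n\times(n+1)\times m$-terms with themselves. Intuitively, the goal is to ``absorb'' the row $i = n$ (i.e.\ $f_n$) and the column $j = n+1$ (i.e.\ $g_{n+1}$) into nested substitutions, so that they no longer need to appear as independent top-level terms. The arity $m'$ of the new terms will necessarily be larger than $m$, because variables of a composition are indexed by pairs (or longer tuples) of original positions, and we may need to inflate $m'$ further to accommodate all the required identities simultaneously.

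First, I would set up the semiring framework of the preceding subsection: introduce an alphabet $\alphabet{A}$ whose letters encode the symbols $f_i, g_j$ (or their variable slots), form the free semiring $\alg{F}$ on $\alphabet{A}$, and translate each $n\times(n+1)\times m$-identity into a relation $e_k = 1$ in $\alg F$. By Proposition~\ref{common-expansion}, verifying any derived identity reduces to exhibiting a common expansion of two elements of $\alg F$.

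Next, I would propose the new terms $f'_{i}$ ($i \in [n-1]$) and $g'_j$ ($j \in [n]$) as specific nested compositions of the originals. A natural attempt is to substitute copies of $g_{n+1}$ into certain slots of $f_i$ and of $g_j$ so that the ``extra'' column $j = n+1$ is used up inside the composition; and symmetrically to substitute copies of $f_n$ into certain slots to eliminate the extra row. The composition pattern must be chosen so that, after setting all variables but one to $x$, the resulting values of $f'_i$ and $g'_j$ agree in precisely the way required by the $(n-1)\times n\times m'$-identities. Idempotence of the new terms is automatic, since any composition of idempotent terms is idempotent.

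Finally, I would verify each $(n-1)\times n\times m'$-identity by encoding both sides as elements of $\alg F$ and producing a common expansion via the $e_k = 1$ rewrites; each rewrite corresponds to invoking one $n\times(n+1)\times m$-identity at a chosen location inside the composed term. The main obstacle, I expect, is the combinatorial design in the previous paragraph: one must find a composition pattern for which \emph{every} required identity admits a common expansion, not just a single convenient one. The freedom to enlarge $m'$ (likely polynomially in $n$ and $m$) is what makes the design feasible, and the hypothesis $n > 1$ ensures that there is a row and column to absorb; the case $n = 1$, which corresponds to $(m+m)$-terms, is precisely where such a reduction terminates.
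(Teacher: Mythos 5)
There is a genuine gap: you identify the right machinery and the right intuition (``absorb the extra row and column by composition, enlarging $m'$ as needed''), but you never carry out the step that actually makes the theorem true, and the way you plan to deploy the semiring framework does not match how the paper uses it.

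First, the paper does \emph{not} write down explicit compositions $f'_i, g'_j$ and then certify them by rewriting. It instead proves an indirect reduction (Claim~\ref{sufficient-sums}): passing to the free commutative monoid $\hat{\alg A}$ on the nonzero elements of the two-generated free algebra $\alg A$, it suffices to exhibit elements $x_1,\dots,x_{n-1}, y_1,\dots,y_n \in \hat R$ with $\sum x_i = \sum y_j$; the new terms $f'_i, g'_j$ are then recovered a posteriori from any common decomposition $\sum_{i,j,k} z_{i,j,k}$ of that sum. Your proposal skips this reduction entirely and asks the reader to believe a ``composition pattern'' exists without giving one; as you yourself note, this is the main obstacle, and it is exactly the content of the theorem.

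Second, the alphabet of the free semiring in the paper is not the set of term symbols $f_i, g_j$ (or their slots), but the set of concrete elements $b_{i,j,k} = f_i(0,\dots,0,1,0,\dots,0) = g_j(0,\dots,0,1,0,\dots,0)$ of $\alg A$, regarded as unary functions in the generator $1$ and composed as such. This monoid structure on $\alg A$ is what lets the semiring equations $\sum_{i,k} b_{i,j,k} = 1$ and $\sum_{j,k} b_{i,j,k} = 1$ encode the given identities. One also needs the nontrivial lemma that expansion modulo these equations preserves membership in $\hat R$, which your plan does not account for. Finally, the heart of the argument is the explicit semiring computation showing $n-1 \sim n$ (via $n \sim n+1$ and $n-1 \sim n-1 + b_{i',j',k'}$), which is nowhere attempted. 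Without Claim~\ref{sufficient-sums}, the correct choice of generators for the semiring, the $\hat R$-preservation lemma, and the $n-1 \sim n$ calculation, the proposal is an outline of the problem rather than a proof.
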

\begin{proof}
Without loss of generality, we can assume that the
$n\times(n+1)\times m$-terms 
$f_1,\ldots,f_{nm},g_1,\ldots,g_{(n+1)m}$ are the only basic
operations of $\alg A$, and $\alg A$ is free idempotent algebra generated by two
symbols $0$ and 1 modulo the equations describing the
$n\times(n+1)\times m$-terms.

Consider the subuniverse $R\leq \alg A^\omega$ generated by all the
infinite sequences that have the element $1$ at exactly one position
and the element $0$ everywhere else.

Notice that $R$ is invariant under all permutations of $\omega$ and since
$\alg A$ is idempotent, every sequence in $R$ has only finitely many
nonzero values.

By $\hat{\alg A}$ we denote the free commutative monoid generated by
all the non-zero elements of $\alg A$. We identify the element
$0\in\alg A$ with the neutral element in $\hat{\alg A}$.
For $\bar x\in R$, let $\hat x$
denote the sum of all nonzero values of $\bar x$, and let
$\hat R$ be the set $\{\hat x : \bar x\in R\}$.

\begin{claim}
\label{sufficient-sums}
To prove the theorem, it suffices to find
$$x_1,x_2,\ldots,x_{n-1},y_1,y_2,\ldots,y_n\in \hat R$$
such that
$x_1+\cdots+x_{n-1} = y_1+\cdots+y_n$.
\end{claim}
If that happens, we can choose large enough $m'$ and express the
elements $x_i,y_i\in\hat{\alg A}$ as follows:
$$
x_i = \sum_j^n\sum_k^{m'}z_{i,j,k} \hbox{ for any $i=1,\ldots,n-1$,}
$$$$
y_j = \sum_i^{n-1}\sum_k^{m'}z_{i,j,k} \hbox{ for any $j=1,\ldots,n$,}
$$
where $z_{i,j,k}\in\alg A$ for
$i=1,\ldots n-1,\, j=1,\ldots n, k=1,\ldots,m'$. Since elements $x_i$
are in $\hat R$, there are $(nm')$-ary terms $f'_i$ such that
if we put the element $1$ at the position $(j,k)$, and zeros
otherwise in $f_i$, we get $z_{i,j,k}$. Similarly, since elements $y_j$
are in $\hat R$, there are $((n-1)m')$-ary terms $g'_j$ such
that if we put $1$ at the position $(i,k)$ and zeros otherwise into
the term $g'_j$, we get $z_{i,j,k}$. So the equations of
$(n-1)\times n\times m'$-terms are satisfied by terms $f'_i,g'_j$ if
variables $x,y$ are substituted by $0$ and $1$, respectively.
Then the equations are satisfied in general, since $0,1$ are the
generators of the free algebra $\alg A$.

Every element of $\alg A$ is a binary function $t(0,1)$ on $A$ in
variables $0, 1$. We regard them as unary functions $t(1)$ where $0$ is a
constant and $1$ is the variable. With this viewpoint, there is a
multiplication on $A$ defined as usual function composition.
$(t_1t_2)(1) = t_1(t_2(1))$. This defines a structure of monoid on $A$
where $1$ is the neutral element and $0$ is an absorbing element.
For $i=1,\ldots,n,\,j=1,\ldots,(n+1),\,k=1,\ldots,m$, let
$b_{i,j,k}\in A$ be the element of the monoid defined by
$$
b_{i,j,k} = f_i(0,0,\ldots,0,\indexed 1{(j,k)},0,\ldots,0)
= g_j(0,0,\ldots,0,\indexed 1{(i,k)},0,\ldots,0),
$$
and let $\alg B$ be the submonoid generated the elements $b_{i,j,k}$.
Finally, let $\hat{\alg B} = (\hat B,+,\cdot,0,1)$ be the additive
submonoid of $\hat{\alg A}$ generated by elements of $\alg B$ with
multiplicative structure inherited from $\alg B$, so
$\hat{\alg B}$ is the free semiring generated by elements
$b_{i,j,k}$. Notice that the universe of $\hat{\alg B}$ is a subset of
the universe of $\hat{\alg A}$.

We equip the semiring $\hat{\alg B}$ with equations $E$ of the form
$$
\sum_i^n\sum_k^m b_{i,j,k}= 1 \hbox{ for all $j=1,\ldots,(n+1)$},
$$$$
\sum_j^{n+1}\sum_k^m b_{i,j,k}= 1 \hbox{ for all $i=1,\ldots,n$}.
$$
In other words, these equations actually say that
$$
f_i(1,0,\ldots,0)+f_i(0,1,\ldots,0)+\cdots+f_i(0,0,\ldots,1)= 1,
$$$$
g_i(1,0,\ldots,0)+g_i(0,1,\ldots,0)+\cdots+g_i(0,0,\ldots,1)= 1.
$$
Let $\sim$ be the congruence generated by these equations $E$.

\begin{claim}
If $p,q\in\hat{\alg B}$ such that $q$ is a single expansion of $p$
using equations $E$ and $p\in \hat R$, then also
$q \in \hat R$.
\end{claim}
Let $t$ be a term in $\omega$ variables (using just finitely many of
them) that takes the generators of $R$ and outputs some $\bar r\in R$ such
that $\hat r = p$. We prove the claim by induction on the
complexity of $t$. Let $p = uv+s$ and $q = uev+s$ where $u,v$ are
monomials, $s$ is a polynomial, and $e$ is a single expansion of $1$.
If $u=1$, we prove the claim directly.
Any single expansion $e$ of $1$ is of the form
$$
h(1,0,\ldots,0)+h(0,1,0,\ldots,0)+\cdots+h(0,\ldots,0,1),
$$
where $h$ is a basic operation of $\alg A$. Let us denote the arity of
$h$ as $k$ and the summands as $b_i$ for $i=1,\ldots,k$. So we can
write $e=\sum_{i=0}^k b_i$.
We take $k$ different
representations $\bar r_1,\ldots \bar r_k\in R$ that differs only in the
possition of $v$ (if there are multiple $v$ in $\bar r$, we vary the
position of one of them and fix the rest). Then
$h(\bar r_1,\ldots,\bar r_k)$ correspond to the polynomial $ev+s = q$.

If $u\neq1$, we use the induction hypothesis.
Assume that $\bar r = h(\bar r_1,\ldots,\bar r_k)$ for an elementary operation
$h$, where all the construction terms for $\bar r_1,\ldots,\bar r_k$ are
simpler. We follow the position of $uv$ in the sequence $\bar r$. On that
position, we see $uv = h(w_1,\ldots,w_k)$. There are two
possibilities. Either idempotency is applied and $w_1=\cdots=w_k$, or
one more letter is appended to the word, therefore all the elements
$w_i$ except one are zeros. In the case of idempotency, we use a
single expansion step to all the sequences $\bar r_i$ in the same way -- we
replace the position with $uv$ by multiple positions
covering $uev$. We denote these modified sequences $\bar r_i$
as $\bar r'_i$. The sequences $\bar r'_i$ were obtained from $\bar r_i$ using a
single expansion step, so they are in $R$ by induction hypothesis.
Finally, $\bar r'=h(\bar r_1,\ldots,\bar r_k)\in R$ and $q = \hat r'$.

In the other case, there is one non-zero $w_i = u_2v$, where
$u = u_1u_2$ and $u_1$ is one of the generators of $\alg B$. Again, we
replace the $u_2v$ in $R$ by $u_2ev$ in $\bar r_i$, getting $\bar r'_i\in R$ by
induction hypothesis. For $j\neq i$, we obtain $\bar r'_j$ just by
expanding the number of zeros at the position of $uv$ so that the
corresponding positions still match. Finally,
$\bar r'=h(\bar r'_1,\ldots,\bar r'_k)\in R$ and $q=\hat r'$.

\begin{claim}
To prove the theorem, it suffices to show that $n-1 \sim n$ in $\hat{\alg B}$.
\end{claim}
Indeed, if $n-1\sim n$, there is a common expansion $s$ by
Proposition~\ref{common-expansion}. Since $s$ is an expansion of
$n-1$, there are $x_1,\ldots,x_{n-1}$ such that
$\sum_i^{n-1} x_i=s$, and every $x_i$ is an expansion of
$1$. Similarly, since $s$ is an expansion of
$n$, there are $y_1,\ldots,y_{n-1}$ such that
$\sum_i^n y_i=s$, and every $y_i$ is an expansion of $1$.
Therefore all the elements $x_i,y_i\in \hat R$ and the assumptions of
Claim~\ref{sufficient-sums} are satisfied.

Now we translated the original problem into the language of the
semiring $\hat{\alg B}$ modulo $\sim$. Before general reasoning, we show the
idea on the example $n=2, m=1$. So $\hat{\alg B}$ is generated by
$b_{11}, b_{12}, b_{13}, b_{21}, b_{22}, b_{23}$, congruence $\sim$ is
generated by
$$
1\sim b_{11}+b_{12}+b_{13} \sim b_{21}+b_{22}+b_{23}
\sim b_{11}+b_{21} \sim b_{12}+b_{22} \sim b_{13}+b_{23},
$$
and we want to prove $1\sim 2$.
Clearly $2\sim 3$ since
$$
2 \sim (b_{11}+b_{12}+b_{13})+(b_{21}+b_{22}+b_{23})
= (b_{11}+b_{21})+(b_{12}+b_{22})+(b_{13}+b_{23}) \sim 3.
$$
Now, let us expand $1$ a bit.
$$
1 \sim b_{11}+b_{12}+b_{13} \sim b_{11}(b_{21}+b_{22}+b_{23}) +
(b_{11}+b_{12}+b_{13})b_{12} + (b_{11}+b_{12}+b_{13})b_{13}
$$$$
= b_{11}(b_{22}+b_{12}+b_{23}+b_{13}) + \cdots \sim 2b_{11}+\cdots
$$
We managed to get $2b_{11}$ in the expanded $1$. Since $2\sim 3$, we
get an extra $b_{11}$, and then collapse the expression using the
reverse process. Therefore $1\sim 1+b_{11}$. But there is nothing
special about the generator $b_{11}$, If we swapped
$b_{11}\leftrightarrow b_{21}$, $b_{12}\leftrightarrow b_{22}$,
$b_{13}\leftrightarrow b_{23}$, we would get $1\sim 1+b_{21}$ by the
same reasoning.
Therefore
$$
1 \sim 1+b_{21} \sim (1+b_{11})+b_{21} = 1+(b_{11}+b_{21}) \sim 2.
$$

Now, let us return to the general setup with generators
$b_{i,j,k}$ for $i=1,\ldots,n, j=1,\ldots(n+1), k=1,\ldots,m$,
and the congruence $\sim$ is generated by
$$
1\sim \sum_i^n\sum_k^m b_{i,j,k} \hbox{ for all $j=1,\ldots,(n+1)$},
$$$$
1\sim \sum_j^{n+1}\sum_k^m b_{i,j,k} \hbox{ for all $i=1,\ldots,n$}.
$$
From the equations, we derive $n\sim n+1$
$$
n\sim \sum_i^n\left(\sum_j^{n+1}\sum_k^m b_{i,j,k}\right)
= \sum_j^{n+1}\left(\sum_i^n\sum_k^m b_{i,j,k}\right)
= n+1.
$$
We fix $i',j',k'$. To prove that $(n-1)\sim(n-1)+b_{i',j',k'}$ it
suffices to get $nb_{i',j',k'}\sim(n+1)b_{i',j',k'}$ in an expanded form of $n-1$.

In the following calculations, by $x > y$ we mean $(\exists z: x=y+z)$.
$$
n-1 \sim (n-1)\sum_j^{n+1}\sum_k^m b_{i',j,k}
> (n-1)b_{i',j',k'} + \sum_{j\neq j'}^{n+1}\sum_k^m b_{i',j,k}
$$$$
= b_{i',j',k'}\cdot\sum_{i\neq i'}^n 1
  + 1\cdot\sum_{j\neq j'}^{n+1}\sum_k^m b_{i',j,k}
$$$$
\sim b_{i',j',k'}\left(\sum_{i\neq i'}^n\sum_j^{n+1}\sum_k^m b_{i,j,k}\right)
+ \left(\sum_j^{n+1}\sum_k^m b_{i',j,k} \right)
\left(\sum_{j\neq j'}^{n+1}\sum_k^m b_{i',j,k}\right)
$$$$
> b_{i',j',k'}\left(\sum_{i\neq i'}^n\sum_{j\neq j'}^{n+1}\sum_k^m b_{i,j,k}
+ \sum_{j\neq j'}^{n+1}\sum_k^m b_{i',j,k}\right)
$$$$
= b_{i',j',k'}\left(\sum_{j\neq j'}^{n+1}\sum_i^n\sum_k^m b_{i,j,k}\right)
\sim b_{i',j',k'}\cdot\sum^{n+1}_{j\neq j'}1 = nb_{i',j',k'}.
$$
Hence $n-1 \sim n-1 + b_{i,j,k}$ for any $i,j,k$.
We finaly get the desired congruence
$$
n-1 \sim n-1 + b_{1,1,1} \sim n-1 + b_{1,1,1} +
b_{1,2,1} \sim \cdots \sim n-1 + \sum_j^{n+1}\sum_k^m b_{1,j,k} \sim n.
$$

\end{proof}

\begin{corollary}
  \label{m-plus-m-weakest}
  Every $SD(\wedge)$ variety has $(m+m)$-terms for some $m$.
\end{corollary}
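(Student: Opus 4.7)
The plan is to combine Proposition~\ref{sd-meet-weakest} with an iterated application of Theorem~\ref{times-decrease}, exploiting the observation recorded just before Proposition~\ref{sd-meet-weakest} that $1\times 2\times m$-terms are, by definition, the same object as $(m+m)$-terms.

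First, I would invoke Proposition~\ref{sd-meet-weakest} to obtain $n\times(n+1)\times m$-terms in the $SD(\wedge)$ variety $\var V$ for some $n,m$. I would then apply Theorem~\ref{times-decrease} repeatedly, each invocation decreasing $n$ by one at the (possible) cost of enlarging the third parameter. Starting from the pair $(n,m)$ and iterating $n-1$ times produces, in succession, $(n-1)\times n\times m_1$-terms, $(n-2)\times(n-1)\times m_2$-terms, \ldots, $1\times 2\times m_{n-1}$-terms for some positive integers $m_1,\ldots,m_{n-1}$. At the last step the identification just recalled translates these $1\times 2\times m_{n-1}$-terms directly into $(m_{n-1}+m_{n-1})$-terms, giving the conclusion with $m:=m_{n-1}$.

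The only point that needs any care is that Theorem~\ref{times-decrease} is phrased for a single idempotent algebra, while the corollary concerns an entire variety. I would bridge this gap at each step by applying the theorem to the free algebra $\alg F_{\var V}(\omega)$ of $\var V$: a finite system of idempotent Maltsev identities holds throughout $\var V$ if and only if it is realised by concrete terms in this free algebra, so the hypothesis and conclusion of Theorem~\ref{times-decrease} pass in both directions between the single-algebra and the variety settings. With that translation in place, the induction on $n$ is entirely mechanical, and I do not anticipate any substantive obstacle beyond this routine free-algebra bookkeeping.
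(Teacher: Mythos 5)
Your proposal is correct and follows exactly the route the paper intends: Proposition~\ref{sd-meet-weakest} provides $n\times(n+1)\times m$-terms, iterated application of Theorem~\ref{times-decrease} reduces to $1\times2\times m'$-terms, and these are by definition $(m'+m')$-terms. The free-algebra bookkeeping you mention is the standard way to pass between the single-algebra and variety formulations and raises no difficulty.
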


\section{Getting to $(3+m)$-terms}
\label{3-plus-m}

In this section, we prove the following

\begin{theorem}
Every $SD(\wedge)$ variety $\var V$ has a $(3+m')$-terms for large enough $m'$.
\end{theorem}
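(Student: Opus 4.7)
The plan is to refine the semiring machinery of Section~\ref{semirings}, now applied to $(M+M)$-terms provided by Corollary~\ref{m-plus-m-weakest}. Without loss of generality, let $\alg A$ be the free idempotent algebra on two generators $0,1$ whose basic operations are exactly the $(M+M)$-terms $(f,g_1,g_2)$ modulo the $(M+M)$-identities, and reconstruct the subuniverse $R\leq\alg A^\omega$ generated by the sequences with a single $1$, the free commutative monoid $\hat{\alg A}$, and the image $\hat R\subseteq\hat{\alg A}$.

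Next, I would formulate a $(3+m')$-analog of Claim~\ref{sufficient-sums}: the existence of $(3+m')$-terms in $\alg A$ is implied by the existence of a sequence $\bar h\in R$ with $3+m'$ nonzero positions, whose first three values $w_1,w_2,w_3$ arise as the single-$y$ values of some 3-ary idempotent term $g_1'\in\alg A$ and whose remaining $m'$ values arise as the single-$y$ values of an $m'$-ary idempotent term $g_2'$. The $(3+m')$-ary term $f'$ is then read off from $\bar h$ itself.

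To produce $\bar h$, I would work inside the semiring $\hat{\alg B}$ generated by the monomials $b_{j,k}=f(0,\ldots,0,1,0,\ldots,0)$, whose defining relations $\sum_k b_{1,k}=1$, $\sum_k b_{2,k}=1$ and $\sum_{j,k} b_{j,k}=1$ collapse $1\sim 2$. Hence $3\sim m'$ for any $m'\geq 1$, and Proposition~\ref{common-expansion} yields a common expansion of $3=1+1+1$ and $m'=1+\cdots+1$. Combined with the preservation lemma that single expansions keep membership in $\hat R$, this gives two decompositions $s=s_1+s_2+s_3=t_1+\cdots+t_{m'}$ with all summands in $\hat R$, which should translate into the required $\bar h$, $g_1'$ and $g_2'$ after reindexing.

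The main obstacle is the ``packaging'' step: the three pieces $s_1,s_2,s_3$ coming from the expansion of $3$ must be realizable as the single-$y$ profile of one 3-ary idempotent term of $\alg A$, not three unrelated ones. I expect this to require a carefully coordinated sequence of expansions, exploiting the freedom in how single-expansion steps may be ordered and the specific form of the $(M+M)$-identities. This is precisely the step whose $2$-ary analog must fail in view of the counterexamples of Section~\ref{counterexample}, so the quantitative difference between $3$ and $2$ will enter here.
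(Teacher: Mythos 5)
Your opening reduction is sound and matches the paper's first move: the paper's Claim~\ref{3+m-sufficient} is exactly your ``$(3+m')$-analog of Claim~\ref{sufficient-sums}'', phrased there as the statement that the $\alg A^3$-pendant $R_3[(0,0,0)]$ is zipped. The gap is in the mechanism you propose for discharging it. A common expansion of $3=1+1+1$ and $m'=1+\cdots+1$ produces $s=s_1+s_2+s_3=t_1+\cdots+t_{m'}$ with each $s_i,t_j\in\hat R$; running the bookkeeping of Claim~\ref{sufficient-sums} on this data yields \emph{three} terms (one per $s_i$) and \emph{$m'$} terms (one per $t_j$) satisfying a $3\times m'$ grid of identities --- a rectangular condition of the $n_1\times n_2\times m$ type, not $(3+m')$-terms. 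What you actually need is a \emph{single} element of $\hat R$ that is a sum of at most three monomials, i.e.\ one triple $(w_1,w_2,w_3)$ obtained by applying one term to three generators of $R$, together with a second element $y_2\in\hat R$ such that the combined multiset again lies in $\hat R$. Three separate elements $s_1,s_2,s_3\in\hat R$ are a different object entirely, and no reindexing converts one into the other.

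Moreover, the expansion calculus cannot produce such a three-monomial element once $m>3$: every nontrivial single expansion replaces one monomial by a sum of $m$ or $2m$ monomials (the right-hand sides of the equations $E$), so every nontrivial expansion of $1$ has at least $m$ summands. A short profile can only arise by identifying variables --- applying a term to \emph{repeated} generators of $R$, as with $h(x,y)=f(x\cdots x,y\cdots y)$ evaluated on two generators --- and this collapsing is invisible to the congruence $\sim$, which only tracks formal expansions. This is precisely where the paper abandons the semiring and introduces different machinery: pendants, the absorption Lemma~\ref{h-absorbtion} for $h$, the butterfly Lemma~\ref{butterfly}, and the transfer relation $\ltimes$ of Lemma~\ref{transfer-rel}, which walks a chain of triples in $R_3$ from $g_1(10\ldots0)\ltimes f(10\ldots0)$ down to $1\ltimes h(1,0)$. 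So you have correctly isolated the hard step (and honestly flagged it), but the tool you propose for it is insufficient in principle, not merely in detail.
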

By Corollary~\ref{m-plus-m-weakest} we know that the variety has the
$(m+m)$-terms for some $m$, denote them $f, g_1, g_2$. For simplicity,
we may assume that the idempotent terms $f,g_1,g_2$ are the only basic
operations of the variety, and that they satisfy only the idempotence,
$(m+m)$-equations and their consequences. Let $\alg A$ be the
$\var V$-free algebra generated by elements $0,1$

Similarly as in the proof of Theorem~\ref{times-decrease},
we define $R_n$ to be a $n$-ary relation generated by tuples with
exactly one element 1 and zeros everywhere else,
where $n\in\{1,2,\ldots,\omega\}$.

For an algebra $\alg B\in\var V$, we
define a $\alg B$-\emph{pendant} to be any subuniverse
$P\subset \alg B\times\alg A^\omega$ that is invariant under
all permutations of the $\omega$ positions on $\alg A^\omega$.

For any $\alg B$-pendant $P$ we define
$P|_0,P|_1\leq \alg B$ as follows
$$
P|_0 = \{b\in\alg B: (b, (0,0,\ldots,0))\in P\}, \quad
P|_1 = \{b\in\alg B: \exists \bar r\in R_\omega\colon(b, \bar r)\in P\}.
$$
If $P|_0$ and $P|_1$ intersect, we call the pendant $P$ \emph{zipped}.
For a subuniverse $C\leq\alg B$ and an element $b\in\alg B$, let
$C[b]$ denote the smallest $\alg B$-pendant $P$ satisfying
$C\leq P|_0$ and $\{b\}\times R_\omega\leq P$. Therefore
$C=C[b]|_0$ and $b\in C[b]|_1$.
Clearly, if $b\in C$, the pendant $C[b]$ is zipped since
$b$ is contained in both $C[b]|_0$ and $C[b]|_1$.

\begin{claim}
\label{3+m-sufficient}
To prove the theorem, it suffices to show that the $\alg A^3$-pendant
$R_3[(0,0,0)]$ is zipped.
\end{claim}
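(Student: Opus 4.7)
The plan is to exploit the zipped property of $R_3[(0,0,0)]$ to read off the triple $(f', g_1', g_2')$ of $(3+m')$-terms directly. If the pendant is zipped, then there is a common element $c = (b_1, b_2, b_3) \in R_3[(0,0,0)]|_0 \cap R_3[(0,0,0)]|_1$, and the two corresponding witnesses---one giving $((b_1, b_2, b_3), 0^\omega) \in R_3[(0,0,0)]$ and the other giving $((b_1, b_2, b_3), \bar r) \in R_3[(0,0,0)]$ for some $\bar r \in R_\omega$---will supply the ternary term $g_1'$ and the pair $(f', g_2')$ respectively.

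For $g_1'$, I would invoke the equality $C = C[b]|_0$ noted just after the definition of $C[b]$, applied with $C = R_3$ and $b = (0,0,0)$, to conclude $c \in R_3$. Since $R_3$ is by definition the subuniverse of $\alg A^3$ generated by the three tuples $(1,0,0), (0,1,0), (0,0,1)$, every element of $R_3$ equals $(\tau(e_1), \tau(e_2), \tau(e_3))$ for some ternary idempotent term $\tau$; taking $g_1' = \tau$ yields $g_1'(e_i) = b_i$ for $i = 1, 2, 3$.

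For $f'$ and $g_2'$, I would unpack the pendant membership $((b_1, b_2, b_3), \bar r) \in R_3[(0,0,0)]$: it is witnessed by a term $t$ of $\var V$ applied to some choice of generators from $R_3 \times \{0^\omega\}$ and $\{(0,0,0)\} \times R_\omega$. Using permutation invariance of the $\omega$-coordinates, I arrange that the $R_\omega$-generators used are exactly $((0,0,0), e_1), \ldots, ((0,0,0), e_{m'})$, each occurring in a single variable slot $v_j$ of $t$, while the remaining variables partition into three sets $S_1, S_2, S_3$ according to which of the three $R_3$-generators they are assigned to. Substituting a fresh variable $y_k$ for every position in $S_k$ and a fresh $z_j$ at position $v_j$ yields a $(3+m')$-ary idempotent term $f'$ with $f'(e_i) = t(1_{S_i}) = b_i$ and $f'(e_{3+j}) = t(e_{v_j}) = \bar r_j$. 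The tuple $(\bar r_1, \ldots, \bar r_{m'})$ lies in $R_{m'}$ (being $\bar r \in R_\omega$ restricted to its support), hence it is the face profile of some $m'$-ary idempotent term $g_2'$ with $g_2'(e_j) = \bar r_j$.

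Since $\alg A$ is $\var V$-free on two generators, these face equalities in $\alg A$ (taking $x=0, y=1$) translate to the $(3+m')$-term identities $f'(x,\ldots,x,y_i,x,\ldots,x) = g_1'(x,\ldots,x,y_i,x,\ldots,x)$ for $i \in \{1,2,3\}$ and $f'(x,\ldots,x,y_{3+i},x,\ldots,x) = g_2'(x,\ldots,x,y_i,x,\ldots,x)$ for $i \in \{1,\ldots,m'\}$, which is precisely the defining condition for $(3+m')$-terms. The only nontrivial step is the combinatorial unpacking of the $|_1$-witness into $f'$---carefully tracking which variables of $t$ correspond to which generator and using $S_\omega$-invariance to place the $R_\omega$-generators at distinct coordinates---but no identity manipulation is needed beyond what the pendant's generator structure already gives us for free.
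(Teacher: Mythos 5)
Your proposal follows the same route as the paper: extract a common element from the zipped pendant, read off the three face profiles, name the witnessing terms, and transfer the face equalities to identities via freeness of $\alg A$. The paper's proof is cleaner in one important respect: it first observes that the pendant $R_3[(0,0,0)]$ is literally $R_\omega$ reindexed over $\{1,2,3\}\cup\omega$ (the generators coincide), after which the three witnesses $\bar r_3\in R_3$, $\bar r_\omega\in R_\omega$, and $(\bar r_3,\bar r_\omega)\in R_\omega$ deliver the three terms directly, and $m'$ is chosen \emph{after} all three witnessing terms are fixed so that $g'_2$ uses at most the first $m'$ generators of $R_\omega$ and $f'$ uses at most the first $3+m'$. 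Your combinatorial unpacking of $t$ reconstructs the same data without stating this identification, which is fine, but it introduces a small gap in the $g_2'$ step: you fix $m'$ by the $R_\omega$-generators appearing in $t$ (the $f'$-witness) and then assert that $(\bar r_1,\ldots,\bar r_{m'})\in R_{m'}$ ``being $\bar r\in R_\omega$ restricted to its support.'' That restriction need not visibly lie in $R_{m'}$: a witness of $\bar r\in R_\omega$ may apply some term $s$ to unit vectors $e_{i_\ell}$ with $i_\ell>m'$, in which case restricting to $\{1,\ldots,m'\}$ turns those arguments into $0^{m'}$, which is not a generator of $R_{m'}$. The fix, as in the paper, is to fix a separate witness $s$ for $\bar r\in R_\omega$ first, then enlarge $m'$ (using permutation invariance to renumber) so that it also accommodates the generators used by $s$; extending $f'$ with dummy variables at the new slots is harmless. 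With that repair your argument is correct and matches the paper's in substance.
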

Indeed, the pendant $P=R_3[(0,0,0)]$ is just $R_\omega$ viewed as a
subuniverse of $\alg A^3\times\alg A^\omega$. So when that pendant is
zipped, there is a common element $\bar r_3\in P|_0 = R_3$ and
$\bar r_3\in P|_1$. By expanding the definition of $P|_1$, we get
$\bar r_\omega \in R_\omega$ such that $(\bar r_3, \bar r_\omega)\in P = R_\omega$.
Let $g'_1$ be the term producing $\bar r_3$ from the generators of $R_3$,
$g'_2$ be the term producing $\bar r_\omega$ from the generators of $R_\omega$
and $f'$ be the term producing $(\bar r_3, \bar r_\omega)$ from the generators of
$R_\omega$. We can choose large enough $m'$ such that $g'_2$ uses at most
first $m'$ generators and $f'$ uses at most first $3+m'$ of them. So we
perceive $g'_2$ as $m'$-ary and $f'$ as $(3+m')$-ary. Since
\[
g_1\left(\begin{pmatrix}1\cr0\cr0\end{pmatrix}\begin{pmatrix}0\cr1\cr0\end{pmatrix}\begin{pmatrix}0\cr0\cr1\end{pmatrix}\right)=\bar r_3,\quad
g_2\left(\begin{pmatrix}1\cr0\cr\vdots\cr0\end{pmatrix}\cdots\begin{pmatrix}0\cr\vdots\cr0\cr1\end{pmatrix}\right)=\bar r_\omega,
\]
\[
f\left(\begin{pmatrix}1\cr0\cr\vdots\cr0\end{pmatrix}\cdots\begin{pmatrix}0\cr\vdots\cr0\cr1\end{pmatrix}\right)=
\begin{pmatrix}\bar r_3\cr\bar r_\omega\end{pmatrix},
\]
the equations of $(3+m')$-terms are satisfied when we plug in $x=0$
and $y=1$. However, the elements $0,1$ are the generators of a free
algebra, so the equations are satisfied in general.

\begin{lemma}
\label{wlog-gen}
Let $\alg B$ be an idempotent algebra, $C\leq\alg B$ its subuniverse,
$b\in\alg B$ an element and $P$ be a $\alg B$-pendant such that
$C\leq P|_0$ and $b\in P|_1$. Then $(C[b])|_1\leq P|_1$.
\end{lemma}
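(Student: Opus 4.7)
The plan is to reduce everything to the minimality of $C[b]$: I will exhibit an auxiliary $\alg B$-pendant $\tilde P \leq \alg B \times \alg A^\omega$ that satisfies the two defining conditions of $C[b]$, namely $C \leq \tilde P|_0$ and $\{b\} \times R_\omega \leq \tilde P$, while also satisfying $\tilde P|_1 \leq P|_1$. Once such $\tilde P$ is produced, the minimality of $C[b]$ forces $C[b] \leq \tilde P$, and the desired inclusion $(C[b])|_1 \leq \tilde P|_1 \leq P|_1$ follows immediately.

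To build $\tilde P$, I would fix a witness $\bar s \in R_\omega$ with $(b,\bar s) \in P$ and use permutation-invariance of $P$ to obtain the whole orbit $\{(b,\bar s \circ \pi) : \pi \in \mathrm{Sym}(\omega)\} \subseteq P$. The key auxiliary object is a subuniverse $T \leq \alg A^\omega \times \alg A^\omega$ relating $R_\omega$-elements to elements of the orbit of $\bar s$; concretely, $T$ is generated by pairs $(e_i,\bar s \circ \pi_i)$ for a family of permutations $\pi_i$ chosen so that the resulting supports of $\bar s \circ \pi_i$ sit on pairwise disjoint fresh blocks of $\omega$. I then define
\[
\tilde P := \bigl\{(b',\bar r) \in \alg B \times \alg A^\omega : \exists\,\bar r^*,\ (\bar r,\bar r^*) \in T \text{ and } (b',\bar r^*) \in P\bigr\},
\]
which is a subuniverse as a relational composition of subuniverses.

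The containment $\{b\} \times R_\omega \leq \tilde P$ is straightforward: any $\bar r = \rho(e_{i_1},\ldots,e_{i_N}) \in R_\omega$ is paired via $T$ with $\bar r^* = \rho(\bar s \circ \pi_{i_1},\ldots,\bar s \circ \pi_{i_N}) \in R_\omega$, and applying $\rho$ componentwise to the generators $(b,\bar s \circ \pi_{i_j}) \in P$ (using idempotence of $\rho$ to keep the first coordinate at $b$) yields $(b,\bar r^*) \in P$. The property $\tilde P|_1 \leq P|_1$ is the clean payoff: if $(b',\bar r) \in \tilde P$ with $\bar r \in R_\omega$, the witness $\bar r^*$ lies in the second projection of $T$, which is the subuniverse of $\alg A^\omega$ generated by $\{\bar s \circ \pi_i\} \subseteq R_\omega$, hence is entirely contained in $R_\omega$. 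Combined with $(b',\bar r^*) \in P$, this gives $b' \in P|_1$.

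The main obstacles are twofold. First, one must verify that $\tilde P$ is permutation-invariant on its $\alg A^\omega$-coordinate; this requires closing $T$ under simultaneous $\mathrm{Sym}(\omega)$-actions on both sides, which can be done without enlarging the second projection beyond $R_\omega$ thanks to the invariance of $R_\omega$. Second, and more delicately, one must ensure $C \times \{0_\omega\} \leq \tilde P$: this is automatic when $0_\omega$ belongs to the subuniverse $\mathrm{proj}_1(T)$ paired with $0_\omega$ in $\mathrm{proj}_2(T)$, and otherwise is handled by adjoining the single generator $(0_\omega,0_\omega)$ to $T$ (whose second coordinate, still acceptable as a witness, yields $(c,0_\omega) \in P$ for $c \in C$ by the hypothesis $C \leq P|_0$). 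Verifying that this adjustment preserves $\mathrm{proj}_2(T) \subseteq R_\omega$—and hence $\tilde P|_1 \leq P|_1$—is the central technical step.
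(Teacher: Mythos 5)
Your overall strategy (build an auxiliary pendant $\tilde P$ that witnesses the defining conditions of $C[b]$ and has $\tilde P|_1\leq P|_1$, then invoke minimality) is a reasonable relational reformulation of what the paper does functionally, but the step you yourself flag as ``the central technical step'' does not close, and the claim you propose to verify there is in fact false. Once you adjoin the generator $(0_\omega,0_\omega)$ to $T$ in order to get $C\leq\tilde P|_0$, the second projection of $T$ contains $0_\omega$, and $0_\omega$ is \emph{not} in general an element of $R_\omega$ --- indeed, if the all-zero tuple belonged to $R_\omega$ (equivalently $(0,0,0)\in R_3$), the pendant $R_3[(0,0,0)]$ would be trivially zipped and the whole theorem of Section 3 would be vacuous. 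So $\mathrm{proj}_2(T)\subseteq R_\omega$ fails, and your ``clean payoff'' argument for $\tilde P|_1\leq P|_1$ breaks: the existential witness $\bar r^*$ attached to a pair $(b',\bar r)$ with $\bar r\in R_\omega$ is no longer guaranteed to lie in $R_\omega$ just because it lies in $\mathrm{proj}_2(T)$.

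The repair is to make $T$ the graph of a map rather than an arbitrary generated relation, which is exactly the paper's proof. Writing the witness as $\bar s=(x_1,\ldots,x_n,0,0,\ldots)$, define the homomorphism $\varphi\colon\alg A\to\alg A^n$ by $0\mapsto(0,\ldots,0)$, $1\mapsto(x_1,\ldots,x_n)$, and extend it coordinatewise to an endomorphism of $\alg A^\omega$. Your generators $(e_i,\bar s\circ\pi_i)$ and $(0_\omega,0_\omega)$ all lie on the graph of this $\varphi$, so $T\subseteq\mathrm{graph}(\varphi)$ and the witness for any $\bar r$ is forced to be $\varphi(\bar r)$; since $\varphi$ sends each generator of $R_\omega$ into $R_\omega$, it maps all of $R_\omega$ into $R_\omega$, which is the conditional statement you actually need (image of $R_\omega$, not of all of $\mathrm{proj}_1(T)$, lands in $R_\omega$). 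With $\varphi$ in hand the detour through minimality and the permutation-invariance of $\tilde P$ (your first obstacle, also left unresolved) becomes unnecessary: the product endomorphism $\psi(y,x)=(y,\varphi(x))$ maps every generator of $C[b]$ into $P$ (the $(b,e_i)$ go to shifted copies of $(b,\bar s)$, which lie in $P$ by permutation invariance of $P$, and the $(c,0_\omega)$ are fixed), hence $\psi(C[b])\leq P$, and any $(b',\bar r)\in C[b]$ with $\bar r\in R_\omega$ yields $(b',\varphi(\bar r))\in P$ with $\varphi(\bar r)\in R_\omega$, i.e.\ $b'\in P|_1$.
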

\begin{proof}
To see that, take an element $(b,\bar r_\omega)\in P$ such that
$\bar r_\omega\in R_\omega$. Let $\bar r_\omega$ be of the form
$(x_1, x_2, \ldots, x_n, 0, 0, \ldots)$ for some large
enough $n$. Since $P$ is invariant under permutations on $\alg A^\omega$, it
contains all the elements of the form
$$
(b, (0,0,\ldots,0,x_1,x_2,\ldots,x_n,0,0,\ldots))
$$
We construct a homomorphism
$\varphi\colon\alg A\to\alg A^n$ by mapping its generators
$$
0\mapsto(0,0,\ldots,0), 1\mapsto(x_1,\ldots,x_n).
$$
We naturally extend $\varphi$ to mapping
$\alg A^\omega \to (\alg A^n)^\omega = \alg A^\omega$. Notice that
$\varphi$ is an endomorphism of $R_\omega$ since it maps generators of
$R_\omega$ into $R_\omega$.

To finish the proof of the lemma, we take any $b'\in C[b] |_1$ and
show that $b'\in P|_1$. There
is $\bar r_\omega\in R_\omega$ such that $(b', \bar r_\omega)\in C[b]$. Then
$\varphi(\bar r_\omega)\in R_\omega$ and moreover
$(b', \varphi(\bar r_\omega))\in P$.
The latter holds since the endomorphism
$\psi\colon\alg B\times\alg A^\omega$ defined by
$\psi((y,x)) = (y,\varphi(x))$ maps the generators of $C[b]$ into $P$.
In particular $P$ contains all the elements
$\psi(b, (0,\ldots,0,1,0,\ldots))$
for any position of 1, and $\psi(c, (0,0,\ldots)$ for any
$c\in C$. So $b'\in P$ and this finishes the proof of the lemma.
\end{proof}

Now, let $h$ be the binary term defined as
$$
h(x,y) = f(\underbrace{\vphantom{y}xx\ldots x}_m,\underbrace{yy\ldots y}_m),
$$
\begin{lemma}
\label{h-absorbtion}
For any $\alg B$-pendant $P$ and $x\in P|_0, y\in P|_1$,
we have $h(x,y),h(y,x)\in P|_1$.
\end{lemma}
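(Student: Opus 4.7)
The plan is to realize each of $h(y,x)$ and $h(x,y)$ as the first coordinate of an element of $P$ whose second coordinate lies in $R_\omega$, by feeding $f$ a carefully arranged list of $2m$ elements of $P$.

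I start by fixing witnesses: since $y \in P|_1$ there exists $\bar r \in R_\omega$ with $(y, \bar r) \in P$, and since $x \in P|_0$ we have $(x, \bar 0) \in P$. Because $\alg A$ is idempotent, every tuple in $R_\omega$ has finite support, so I can pick permutations $\sigma_1, \ldots, \sigma_m$ of $\omega$ such that the translates $\bar r^{(i)} = \sigma_i(\bar r)$ have pairwise disjoint sets of nonzero positions; permutation-invariance of $P$ then gives $(y, \bar r^{(i)}) \in P$ for each $i$.

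To prove $h(y,x) \in P|_1$, I apply $f$ inside $P$ to the $2m$-tuple $(y, \bar r^{(1)}), \ldots, (y, \bar r^{(m)}), (x, \bar 0), \ldots, (x, \bar 0)$. The first coordinate of the output is $f(y, \ldots, y, x, \ldots, x) = h(y,x)$, and the second coordinate is a sequence $\bar s$ computed coordinatewise. The key claim is that $\bar s = g_1(\bar r^{(1)}, \ldots, \bar r^{(m)})$, which lies in $R_\omega$ since $R_\omega$ is a subuniverse. I verify the claim position by position: at any $\omega$-coordinate lying outside every $\bar r^{(i)}$ both sides are $0$ by idempotence, while at a coordinate belonging to the support of exactly one $\bar r^{(i)}$ the arguments of $f$ reduce to a single nonzero entry sitting in the first block of $m$ variables, so the first $(m+m)$-identity collapses $f$ to $g_1$ and matches $g_1(\bar r^{(1)}, \ldots, \bar r^{(m)})$ at that position.

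For $h(x,y)$ the argument is symmetric: feed $f$ the list $(x, \bar 0), \ldots, (x, \bar 0), (y, \bar r^{(1)}), \ldots, (y, \bar r^{(m)})$ and invoke the second $(m+m)$-identity to recognise the second coordinate as $g_2(\bar r^{(1)}, \ldots, \bar r^{(m)}) \in R_\omega$. The only genuinely delicate step is the choice of disjoint supports: it is what guarantees that every $\omega$-coordinate presents $f$ with a degenerate input of the shape handled by one of the $(m+m)$-identities (a single $y$ among $x$'s). Once that is arranged, closure of $R_\omega$ under $g_1$ and $g_2$ produces the required witness automatically.
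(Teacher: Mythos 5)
Your proof is correct and follows essentially the same idea as the paper's: feed $f$ a $2m$-column matrix whose first coordinates give $h(x,y)$ or $h(y,x)$ and whose $\omega$-coordinates each present $f$ with at most one nonzero entry, then apply the $(m+m)$-identities coordinatewise to recognise the second coordinate as a $g_1$- or $g_2$-image of elements of $R_\omega$. The only difference is in the setup: the paper first normalizes the witness for $y\in P|_1$ to the generator tuple $(1,0,0,\ldots)$ via Lemma~\ref{wlog-gen}, whereas you keep a general witness $\bar r$ and take $m$ permuted copies with pairwise disjoint (finite) supports, which works equally well since $P$ and $R_\omega$ are permutation-invariant.
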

\begin{proof}
Without loss of generality, we may assume that
$(y, (1,0,\ldots,0))\in P$. If not, we use Lemma~\ref{wlog-gen}
and work with $(P|_0)[y]$ instead of $P$. Then the lemma follows from
the identities
\[
f\begin{pmatrix}
  x & x & \ldots & x, & y & y & \ldots & y \cr
  0 & 0 & \ldots & 0, & 1 & 0 & \ldots & 0 \cr
  0 & 0 & \ldots & 0, & 0 & 1 & \ldots & 0 \cr
   &  & \vdots &  & \vdots & \vdots & \ddots & \vdots \cr
  0 & 0 & \ldots & 0, & 0 & 0 & \ldots & 1 \cr
  0 & 0 & \ldots & 0, & 0 & 0 & \ldots & 0 \cr
    &   & \vdots &    &   &   & \vdots &  \cr
   \end{pmatrix} = 
 g_2\xmatrix{
    \multispan 4\hss$h(x,y)$\hss}{
    1 & 0 & \ldots & 0 \cr
    0 & 1 & \ldots & 0 \cr
\vdots&\vdots&\ddots&\vdots\cr
    0 & 0 & \ldots & 1 \cr
    0 & 0 & \ldots & 0 \cr
      &   & \vdots & \cr
  },
\]\[
f\begin{pmatrix}
  y & y & \ldots & y, & x & x & \ldots & x \cr
  1 & 0 & \ldots & 0, & 0 & 0 & \ldots & 0 \cr
  0 & 1 & \ldots & 0, & 0 & 0 & \ldots & 0 \cr
 \vdots & \vdots & \ddots & \vdots &   &   & \vdots &     \cr
  0 & 0 & \ldots & 1, & 0 & 0 & \ldots & 0 \cr
  0 & 0 & \ldots & 0, & 0 & 0 & \ldots & 0 \cr
    &   & \vdots &    &   &   & \vdots &  \cr
\end{pmatrix} =
 g_1\xmatrix{
    \multispan 4\hss$h(y,x)$\hss}{
    1 & 0 & \ldots & 0 \cr
    0 & 1 & \ldots & 0 \cr
\vdots&\vdots&\ddots&\vdots\cr
    0 & 0 & \ldots & 1 \cr
    0 & 0 & \ldots & 0 \cr
      &   & \vdots & \cr
  },
\]
The columns of the identities encode such sequences in $\alg B\times\alg A^\omega$ that
\begin{itemize}
\item are contained in $P$: This is apparent from the left hand side,
\item has elements $h(x,y), h(y,x)$ at their first coordinates,
\item the other part is contained in $R_\omega$: This is apparent from
the right hand side.
\end{itemize}
Therefore $h(x,y), h(y,h)\in P|_1$.
\end{proof}

\begin{lemma}
\label{butterfly}
Let $\alg B_1, \alg B_2$ be idempotent algebras, $P$ be a
$(\alg B_1\times\alg B_2)$-pendant. Assume that there exist
$x,y\in\alg B_1$ and $u,v\in\alg B_2$ such that
$(x,u),(y,u),(x,v)\in P|_0$ and $(y,v)\in P|_1$.
Then $P$ is zipped.
\end{lemma}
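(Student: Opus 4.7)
The plan is to reduce the two-dimensional butterfly problem to a one-dimensional $\alg B_1$-pendant problem by slicing $P$ at a carefully chosen element of $\alg B_2$, and then to apply Lemma~\ref{h-absorbtion} inside the slice. The guiding observation is that the element $h(u,v)$ should appear simultaneously as a second coordinate of some element of $P|_0$ and of some element of $P|_1$; once this is arranged, fixing the second coordinate removes one dimension from the problem.

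First I would collect what is known about elements whose second coordinate equals $h(u,v)$. Since $P|_0$ is a subuniverse containing $(x,u),(y,u),(x,v)$, the binary term $h$ yields
$$
h((x,u),(x,v)) = (x, h(u,v)) \in P|_0, \qquad h((y,u),(x,v)) = (h(y,x), h(u,v)) \in P|_0.
$$
On the other side, Lemma~\ref{h-absorbtion} applied to $P$ combines $(y,v) \in P|_1$ with $(x,u),(y,u) \in P|_0$ to produce
$$
h((x,u),(y,v)) = (h(x,y), h(u,v)) \in P|_1, \qquad h((y,u),(y,v)) = (y, h(u,v)) \in P|_1.
$$

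Next I would define the slice
$$
P' = \{(a,\bar r) \in \alg B_1 \times \alg A^\omega : ((a,h(u,v)),\bar r) \in P\}.
$$
Because $\alg B_2$ is idempotent, $\{h(u,v)\}$ is a subuniverse of $\alg B_2$, so $\alg B_1 \times \{h(u,v)\} \times \alg A^\omega$ is a subuniverse of $\alg B_1 \times \alg B_2 \times \alg A^\omega$; intersecting with $P$ and identifying with $\alg B_1 \times \alg A^\omega$ shows that $P'$ is a subuniverse, and permutation invariance is inherited from $P$, so $P'$ is a genuine $\alg B_1$-pendant. The four computations above translate to $x,\,h(y,x) \in P'|_0$ and $y,\,h(x,y) \in P'|_1$.

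Finally, applying Lemma~\ref{h-absorbtion} a second time, now inside the pendant $P'$ with $x \in P'|_0$ and $y \in P'|_1$, delivers $h(y,x) \in P'|_1$. Combined with $h(y,x) \in P'|_0$ from the previous step, this exhibits $h(y,x) \in P'|_0 \cap P'|_1$, so $P'$ is zipped; lifting back, $(h(y,x), h(u,v)) \in P|_0 \cap P|_1$, which is the required zipping of $P$. The only nontrivial point in the whole argument is the verification that the slice $P'$ is a bona fide pendant, which is where the idempotence of $\alg B_2$ is essential; everything else is bookkeeping with the already-established $h$-absorption.
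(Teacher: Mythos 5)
Your proof is correct and is essentially the paper's argument: both rest on closing $P|_0$ under $h$ and on two applications of Lemma~\ref{h-absorbtion}, producing a witness of the form $(h(\cdot,\cdot),h(\cdot,\cdot))$ in $P|_0\cap P|_1$ (yours is $(h(y,x),h(u,v))$, the paper's is the symmetric variant $(h(x,y),h(v,u))$). The slice $P'$ is sound but dispensable: your final application of Lemma~\ref{h-absorbtion} could be made directly in $P$ with $(x,h(u,v))\in P|_0$ and $(y,h(u,v))\in P|_1$, which is exactly how the paper phrases it.
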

\begin{proof}
The pair $(h(x,y), h(v,u))$ is in the intersection
$P|_0\cap P|_1$. Indeed, it is contained in $P|_0$ since we can write
$$
\begin{pmatrix}h(x,y)\cr h(v,u)\end{pmatrix} = h\begin{pmatrix}x & y\cr v & u\end{pmatrix}.
$$
Alternatively, we can use the following expansion of $(h(x,y), h(v,u))$:
$$
\begin{pmatrix}h(x,y)\cr h(v,u)\end{pmatrix}
= h\left(
    h\begin{pmatrix}x, & x\cr v, & u\end{pmatrix},
    h\begin{pmatrix}y, & y\cr v, & u\end{pmatrix}
    \right).
$$
By Lemma~\ref{h-absorbtion} used twice, the pair is also an element of
$P|_1$, which completes the proof.
\end{proof}

\begin{lemma}
\label{zipped-equiv}
Let $\alg B_1, \alg B_2$ be idempotent algebras
and $R\leq\alg B_1\times\alg B_2$ be a compatible relation.
Assume that there are elements $x\in\alg B_1, u,v\in\alg B_2$ such
that $(x,u),(x,v)\in R$. Then for any $y\in\alg B_1$ the
$(\alg B_1\times \alg B_2)$-pendant $R[(y,u)]$ is zipped if and only
if the $(\alg B_1\times \alg B_2)$-pendant $R[(y,v)]$ is zipped.
\end{lemma}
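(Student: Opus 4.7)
By the symmetry of the statement in $u$ and $v$, it suffices to show that if $R[(y,u)]$ is zipped then $R[(y,v)]$ is zipped. My strategy is to introduce an auxiliary pendant living over a larger product algebra whose two natural projections recover $R[(y,u)]$ and $R[(y,v)]$; a zipped witness in this larger pendant projects down to a zipped witness of $R[(y,v)]$, reducing the task to producing one there from the given witness of $R[(y,u)]$.

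Define $R^\triangle\leq\alg B_1\times\alg B_2^2$ to be the subuniverse of triples $(p,q_1,q_2)$ with $(p,q_1),(p,q_2)\in R$; this is a subuniverse as the preimage of $R\times R$ under the homomorphism $(p,q_1,q_2)\mapsto((p,q_1),(p,q_2))$. Note $(x,u,v)\in R^\triangle$ by hypothesis, and both projections $\pi_{12},\pi_{13}\colon R^\triangle\to R$ are surjective since every $(p,q)\in R$ lifts to $(p,q,q)\in R^\triangle$. Let $Q:=R^\triangle[(y,u,v)]$ be the corresponding $(\alg B_1\times\alg B_2^2)$-pendant. Extending $\pi_{12}$ and $\pi_{13}$ identically on the $\omega$-coordinate, the generators of $Q$ map onto the generators of $R[(y,u)]$ and $R[(y,v)]$, so $\pi_{12}(Q)=R[(y,u)]$ and $\pi_{13}(Q)=R[(y,v)]$; hence it suffices to prove $Q$ is zipped.

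To this end, lift the zipped witness $(a_1,a_2)$ of $R[(y,u)]$ through $\pi_{12}$: there exist $c_0,c_1\in\alg B_2$ and $\bar r\in R_\omega$ with $(a_1,a_2,c_0,0^\omega),(a_1,a_2,c_1,\bar r)\in Q$. Viewing $Q$ as a pendant over $(\alg B_1\times\alg B_2)\times\alg B_2$, these say $((a_1,a_2),c_0)\in Q|_0$ and $((a_1,a_2),c_1)\in Q|_1$. I would then apply the butterfly lemma (Lemma~\ref{butterfly}) to $Q$, combining these with the elements $(x,u,u),(x,u,v),(x,v,u),(x,v,v)\in R^\triangle\subseteq Q|_0$ and with the generator $(y,u,v)\in Q|_1$, using $h$-absorption (Lemma~\ref{h-absorbtion}) to produce any intermediate elements needed. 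The main obstacle will be precisely this alignment step: the lifted witness lives above $(a_1,a_2)$ in the first factor while the $R^\triangle$-witnesses live above $(x,u)$ or $(x,v)$, so the four-element ``butterfly rectangle'' does not immediately assemble out of the pieces on hand, and bridging these fibers through the closure of $Q|_0,Q|_1$ under the algebra operations and through $h$-absorption, possibly iterated, is the technical heart of the argument.
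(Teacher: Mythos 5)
Your reduction to the auxiliary pendant $Q=R^\triangle[(y,u,v)]$ is sound as far as it goes (the projections $\pi_{12},\pi_{13}$ do carry a zipped witness of $Q$ down to zipped witnesses of $R[(y,u)]$ and $R[(y,v)]$), but the argument stops exactly where the work is, and you say so yourself. Viewing $Q$ over $(\alg B_1\times\alg B_2)\times\alg B_2$, you produce $((a_1,a_2),c_0)\in Q|_0$ and $((a_1,a_2),c_1)\in Q|_1$; to invoke Lemma~\ref{butterfly} you would still need a second fiber, i.e.\ some $X$ with $(X,c_0),(X,c_1)\in Q|_0$. Nothing among $(x,u,u),(x,u,v),(x,v,v)$ or the seed $(y,u,v)$ supplies it: those elements sit over $(x,u)$ and $(x,v)$, not over $(a_1,a_2)$, and no concrete application of Lemma~\ref{h-absorbtion} is exhibited that bridges the two fibers. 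Moreover the obstruction is not merely a missing computation: your $R^\triangle$ carries only the data $(y;u,v)$, so whatever triple $(a_1,a_2,c)$ you generate in $Q$, its coordinates never yield the element playing the role of $x$ in the butterfly rectangle. This is a genuine gap, not an omitted routine step.

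The paper's proof sidesteps the problem by never attempting to zip the large pendant. It doubles \emph{both} factors: with $R'=\{(a_1,a_1,a_2,a_2):(a_1,a_2)\in R\}$ and $P=R'[(x,y,u,v)]$, the zipped witness $(y_0,u_0)\in R\cap R[(y,u)]|_1$ lifts along the projection onto coordinates $(2,3)$ to a single quadruple $(x_0,y_0,u_0,v_0)\in P|_1$. Because every generator of $P$ projects into $R$ on coordinates $(1,3)$ and $(1,4)$ (this is where $(x,u),(x,v)\in R$ enters), one gets $(x_0,u_0),(x_0,v_0)\in R$ for free, while $(y_0,u_0)\in R$ by the choice of witness and $(y_0,v_0)\in R[(y,v)]|_1$ by projecting onto $(2,4)$. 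These four corners feed Lemma~\ref{butterfly} applied directly to the small pendant $R[(y,v)]$. If you want to repair your version, the fix is precisely to enlarge $R^\triangle$ to a four-coordinate relation whose seed is $(x,y,u,v)$, and to replace ``zip $Q$'' by ``extract one coherent tuple from $Q|_1$ and read off its pairwise projections''; that extraction is the bridging step your proposal leaves open.
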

\begin{proof}
It suffices to show the forward implication. Since $R[(y,u)]$ is
zipped, there is some $(y_0,u_0)\in R\cap R[(y,u)]|_1$. Consider the
4-ary relation
$$
R' = \{(a_1,a_1,a_2,a_2):(a_1,a_2)\in R\},
$$
and the $(\alg B_1^2\times \alg B_2^2)$-pendant
$P=R'[(x,y,u,v)]$. Since $(y_0,u_0)\in R[(y,u)]|_1$, we can find
a quadruple $(x_0,y_0,u_0,v_0)$ in $P|_1$ for some additionally
generated elements $x_0, v_0$. So
$$
(x_0,u_0)\in R[(x,u)]|_1, \quad (x_0,v_0)\in R[(x,v)]|_1,
\quad (y_0,v_0)\in R[(y,v)]|_1.
$$
Since $(x,u),(x,v) \in R$, also
$(x_0,u_0), (x_0,v_0)\in R$. Let $Q$ be the pendant $R[(y,v)]$.
We have $(x_0,u_0).(x_0,v_0).(y_0,u_0)\in Q|_0$ and
$(y_0,v_0)\in Q|_1$. Therefore, the pendant $Q$ is
zipped by Lemma~\ref{butterfly}.
\end{proof}

Finally, we define a relation $\ltimes$ on $\alg A$ as follows. We
write $x\ltimes y$ if there are $u,v\in R_3$ such that
\begin{itemize}
\item $(x,u,v)\in R_3$,
\item The $\alg A^3$-pendant $R_3[(y,u,v)]$ is zipped.
\end{itemize}
Notice that $\ltimes$ is reflexive: Indeed for any $x$, there are $u,v$ such
that $(x,u,v)\in R_3$. Then also $R[(x,u,v)]$ is zipped, so
$x\ltimes x$.

\begin{lemma}
\label{transfer-rel}
If $x\ltimes y$ and there are $c,x',y'\in\alg A$ such that
the triples $(x,c,x'), (y,c,y')$ are in $R_3$, then
$x'\ltimes y'$.
\end{lemma}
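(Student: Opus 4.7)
The plan is to transfer zippedness from the pendant $R_3[(y,u,v)]$, which witnesses $x\ltimes y$, to a pendant of the form $R_3[(y',u',v')]$ witnessing $x'\ltimes y'$, by repeatedly invoking Lemma~\ref{zipped-equiv} with different splits of $\alg A^3$ as $\alg B_1\times\alg B_2$.

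The first step is to notice that the triples $(x,u,v)$ and $(x,c,x')$ both lie in $R_3$ and share the first coordinate~$x$. Applying Lemma~\ref{zipped-equiv} to $R_3\leq \alg A\times\alg A^2$, split so that the first coordinate sits in $\alg A$ and the remaining pair in $\alg A^2$, gives that for every $z\in\alg A$ the pendants $R_3[(z,u,v)]$ and $R_3[(z,c,x')]$ are simultaneously zipped or not. Taking $z=y$, the hypothesis $x\ltimes y$ yields that $R_3[(y,c,x')]$ is zipped.

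The next task is to choose the right $u',v'$ so that $(x',u',v')\in R_3$ and the zippedness of $R_3[(y,c,x')]$ can be transported onto $R_3[(y',u',v')]$. A natural candidate is $(u',v')=(c,x)$: the triple $(x',c,x)$ lies in $R_3$ because $R_3$ is closed under coordinate-permuting terms (any ternary term witnessing $(x,c,x')\in R_3$, precomposed with a permutation of its variables, witnesses $(x',c,x)$), and by the same token $(y',c,y)\in R_3$. I would then apply Lemma~\ref{zipped-equiv} again, now with the split in which the middle coordinate sits in $\alg A$. The pairs $(x,c,x'),(y,c,y')$ and $(x',c,x),(y',c,y)$ each share the middle coordinate $c$, giving two families of equivalences $R_3[(x,z,x')]\Leftrightarrow R_3[(y,z,y')]$ and $R_3[(x',z,x)]\Leftrightarrow R_3[(y',z,y)]$, valid for every $z\in\alg A$. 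Chaining these with the equivalence obtained in the first step, and, if necessary, with further moves coming from other pairs of permuted triples in $R_3$ (or different splits of $R_3\leq\alg A^2\times\alg A$), should transport the zippedness of $R_3[(y,c,x')]$ onto the zippedness of $R_3[(y',c,x)]$, completing the proof with the witness $(u',v')=(c,x)$.

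The main obstacle is the bookkeeping: each application of Lemma~\ref{zipped-equiv} changes only one block of coordinates of the pendant, while the argument needs to simultaneously change the outer coordinate (from $y$ to $y'$) and move the inner pair (from $(c,x')$ to $(c,x)$). It is conceivable that no direct chain of Lemma~\ref{zipped-equiv}-applications closes the loop; in that case I would fall back on Lemma~\ref{wlog-gen}, which lets one push a zippedness witness from $R_3[(y,c,x')]$ into $R_3[(y',c,x)]$ as soon as the pinned element $(y,c,x')$ is exhibited inside $R_3[(y',c,x)]|_1$. This reduces the remaining task to a concrete term construction producing $((y,c,x'),\bar\rho)\in R_3[(y',c,x)]$ for some $\bar\rho\in R_\omega$, using the ternary terms that witness the permutations of $(x,c,x')$ and $(y,c,y')$ in $R_3$.
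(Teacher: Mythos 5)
Your first step coincides exactly with the paper's: a single application of Lemma~\ref{zipped-equiv} with the split $\alg A\times\alg A^2$, pivot $x$ in the first coordinate and the pairs $(u,v)$, $(c,x')$ in the other block, giving that $R_3[(y,c,x')]$ is zipped. The gap is everything after that. The two families you then extract from the pivot $c$ in the middle coordinate relate only $R_3[(x,z,x')]$ with $R_3[(y,z,y')]$, and $R_3[(x',z,x)]$ with $R_3[(y',z,y)]$; at $z=c$ all four of these pendants are pinned at triples already lying in $R_3$ and are therefore trivially zipped, so the equivalences carry no information, and for no $z$ and no coordinate permutation do they mention either $(y,c,x')$ (what you have) or $(y',c,x)$ (what you want). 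The obstruction is structural: each use of Lemma~\ref{zipped-equiv} keeps one block of the pin fixed and needs the two alternatives in the complementary block to be $R_3$-related to a common element, whereas $(y',c,x)$ mixes $y'$ from one hypothesis triple with $x$ from the other, and no common element tying these together is available. The fallback via Lemma~\ref{wlog-gen} would require a concrete term placing $(y,c,x')$ into $R_3[(y',c,x)]|_1$, which you do not supply and which I do not see how to produce. So the proposal does not close.

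The paper never aims at $R_3[(y',c,x)]$. It stops after the one zipped-equiv application and takes the witnesses to be $(u',v')=(c,y)$: the condition pinned at $y'$ becomes the trivial membership $(y',c,y)\in R_3$ (permutation-invariance of $R_3$), and the condition pinned at $x'$ becomes the zippedness of $R_3[(x',c,y)]$, which is just a coordinate permutation of the fact already proved. In other words, the paper distributes the two requirements of $\ltimes$ so that the \emph{membership} lands on $y'$ and the \emph{zippedness} on $x'$ --- the reverse of the distribution in the definition of $\ltimes$ as literally printed (an orientation slip in the paper; the definition, this lemma and the final chain must be read with one consistent orientation). Your insistence on the literal distribution, with membership at $x'$ via $(x',c,x)\in R_3$ and zippedness at $y'$, is precisely what forces the extra step that the available moves cannot deliver. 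All the needed facts are already on your table ($R_3[(y,c,x')]$ zipped, $(y',c,y)\in R_3$); they just have to be paired as the witness, with no second transport step.
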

\begin{proof}
Consider $u,v$ as in the definition of the relation $\ltimes$.
We will show that $x'\ltimes y'$ by finding approptiate $u',v'$. We
set $u'=c, v'=y$, so the condition (ii) is satisfied since
$(y',c,y)\in R_3$ by symmetry of $R_3$. To establish $x\ltimes y$ we
need to prove that $R_3[(x',c,y)]$ is zipped, equivalently,
that $R_3[(y,c,x')]$ is zipped. We interpret $\alg A^3$ as
$\alg A\times\alg A^2$ and use Lemma~\ref{zipped-equiv}. We plug in
$$
x\mapsto x, y\mapsto y, u\mapsto (u,v), v\mapsto (c,x'),
$$
Indeed $(x,u,v), (x,c,x')\in R_3$ and
$R_3[(y,u,v)]|$ is zipped. So the assumptions of
Lemma~\ref{zipped-equiv} are satisfied, and consequently
$R[(y,c,x')]$ is zipped.
\end{proof}

We are finally ready to prove the theorem.
We start with $g_1(100\ldots 0)\ltimes f(100\ldots0)$ and get to
$1\ltimes h(1,0)$ using Lemma~\ref{transfer-rel} and the following
triples in $R_3$:
$$
\begin{pmatrix}g_1(10\ldots)\cr g_1(010\ldots)\cr g_1(0011\ldots)\end{pmatrix}
\begin{pmatrix}f(10\ldots)\cr f(010\ldots)\cr f(0011\ldots)\end{pmatrix},
\begin{pmatrix}g_1(0011\ldots)\cr 0\cr g_1(1100\ldots)\end{pmatrix}
\begin{pmatrix}f(0011\ldots)\cr 0\cr f(1100\ldots)\end{pmatrix},
$$$$
\begin{pmatrix}g_1(110\ldots)\cr g_1(0010\ldots)\cr g_1(0001\ldots)\end{pmatrix}
\begin{pmatrix}f(110\ldots)\cr f(0010\ldots)\cr f(0001\ldots)\end{pmatrix},
\ldots,
\begin{pmatrix}g_1(0\ldots0)\cr 0\cr g_1(1\ldots 1)\end{pmatrix}
\begin{pmatrix}h(0,1)\cr0\cr h(1,0)\end{pmatrix}.
$$

So, there are $u,v$ such that $(1,u,v)\in R_3$ and $R_3[(h(1,0),u,v)]$
is zipped. The first condition enforces $u=v=0$, so
$R_3[(h(1,0),0,0)]$ is zipped. However, by Lemma~\ref{h-absorbtion}
$(h(1,0),0,0)\in R_3[(0,0,0)] |_1$, so $R_3[(0,0,0)]$ is zipped
(Lemma~\ref{wlog-gen}, universality of pendant construction), and
the proof is finished by Claim~\ref{3+m-sufficient}.

\section{A counterexample for $(2+m)$-terms}
\label{counterexample}

Based on the result of the previous chapter that some $(3+m)$-terms
are
satisfied in every $SD(\wedge)$ variety, one could ask whether the
result could be strengthened to $(2+m)$-terms.
However, as we demonstrate in this section, such a generalization is
not possible.
Not only that there is an algebra in a $SD(\wedge)$ variety that does
not have $(2+m)$-terms but there is even such an algebra that belongs
to a congruence distributive variety.

Even stronger Maltsev condition than congruence distributivity is the
existence of a near unanimity term. A \emph{near unanimity} term (NU
term for short) is a term $t$ satisfying
$$
t(x,x,\ldots,x,\indexed yi,x,\ldots,x) = x
$$
for any position $i$.

There is no algebra having an NU term and no $(2+m)$-terms, since
putting $g_2$ to be the NU term and $f,g_1$ to be just the projections
on the first coordinate meet the requirements of the
$(2+m)$-terms. However, in our first example we demonstrate that one
existence of an NU term does not imply $(2+m)$-terms for a fixed $m$.

Consider the
following symmetric $n$-ary operations $t^{\alg A}_n, t^{\alg B}_n$
for $n\geq 5$ on rational numbers: Let $x_1\leq x_2\cdots\leq x_n$ be
a sorted input of such an operation. Then
$$
t^{\alg A}_n(x_1,\ldots,x_n) = \frac{x_2+\cdots+x_{n-1}}{n-2}, \quad
t^{\alg B}_n(x_1,\ldots,x_n) = \frac{x_3+\cdots+x_{n-2}}{n-4}.
$$
If the input is not sorted, we first sort it and then perform the
calculation. These operations are clearly NU, that is,
$$
t^{\alg A}_n(x,x,\ldots,x,y,x,\ldots x) = t^{\alg B}_n(x,x,\ldots,x,y,x,\ldots x) = x
$$
for any position of $y$.

For proving key properties of $t$, we need a lemma.
\begin{lemma}
\label{wlog-sorted}
Let $x_1,\ldots,x_n,y_1,\ldots,y_n\in\Q$ be such that
$x_i\leq y_i$ for all $i=1,\ldots,n$. Let $x_1',\ldots,x'_n$
be $x_1,x_2,\ldots,x_n$ sorted in increasing
order, and let $y'_1,\ldots,y'_n$ be sorted
$y_1,\ldots y_n$. Then $x'_i\leq y'_i$ for all $i$ and the set
$\{i : x'_i < y'_i\}$ is at least as large as the set
$\{i : x_i < y_i\}$.
\end{lemma}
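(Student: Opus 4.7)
The plan is to prove the two assertions separately. For the pointwise inequality $x'_i \le y'_i$, I would apply the standard order-statistic argument via cumulative counts. Setting $A_x(v) = |\{j : x_j \le v\}|$ and $A_y(v) = |\{j : y_j \le v\}|$ for $v \in \Q$, the hypothesis $x_j \le y_j$ immediately gives $A_x(v) \ge A_y(v)$, since every index counted by $A_y(v)$ is also counted by $A_x(v)$. Because $y'_i$ is characterized as the least $v$ with $A_y(v) \ge i$, substituting $v = y'_i$ yields $A_x(y'_i) \ge i$, and therefore $x'_i \le y'_i$.

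For the strict-count assertion I plan a simultaneous bubble-sort argument. As long as $x$ or $y$ still has an adjacent inversion at some pair $(i, i+1)$, I apply one of three moves. If both $x_i > x_{i+1}$ and $y_i > y_{i+1}$, swap both pairs simultaneously; the two indicators at positions $i, i+1$ are merely relabeled, and the pointwise inequality is trivially preserved. If only $x_i > x_{i+1}$, swap $x_i$ with $x_{i+1}$ alone; the chain $x_{i+1} < x_i \le y_i \le y_{i+1}$ keeps the swap legal. If only $y_i > y_{i+1}$, swap $y_i$ with $y_{i+1}$ alone; here $x_i \le x_{i+1} \le y_{i+1} < y_i$ shows the swap is legal. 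Each move strictly reduces the total number of inversions in $x$ plus in $y$, so the procedure terminates after finitely many steps with both sequences sorted, i.e., at $(x', y')$.

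It remains to verify that none of these moves decreases the strict count. This reduces to inspecting, at the two affected positions, the four indicators $[x_i < y_i]$, $[x_{i+1} < y_{i+1}]$ (before) and their analogues after. For instance, in the \emph{only $y$ inverted} case, after the swap the position-$(i{+}1)$ indicator becomes $[x_{i+1} < y_i] = 1$ (since $x_{i+1} \le y_{i+1} < y_i$), while the position-$i$ indicator $[x_i < y_{i+1}]$ can fail only if $x_i = y_{i+1}$, in which case $x_i = x_{i+1} = y_{i+1}$ and so the original indicator $[x_{i+1} < y_{i+1}]$ was $0$. A case-by-case tabulation then confirms no net loss in any of the three moves. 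The main obstacle is precisely this tabulation; everything else is bookkeeping.
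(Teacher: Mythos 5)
Your proof is correct, and its core --- reducing to adjacent transpositions and checking that each legal swap preserves the pointwise domination and does not decrease the strict count --- is the same mechanism the paper uses. The organizational differences are worth noting. The paper first permutes the index set so that $x$ is already sorted (this changes neither multiset nor the size of $\{i : x_i < y_i\}$), so only $y$ needs to be bubble-sorted and only one type of move ever occurs; it then reads off both conclusions, including $x'_i \le y'_i$, from that single two-position case analysis. You instead prove the pointwise inequality separately by the cumulative-count argument $A_x(v)\ge A_y(v)$ --- a clean, genuinely different route for that half, though it becomes redundant once your transposition bookkeeping is in place, since that bookkeeping already maintains $x_i\le y_i$ throughout and terminates at the sorted pair --- and you run a simultaneous sort with three move types, which multiplies the cases to tabulate. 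The one case you work out in detail (only $y$ inverted) is precisely the paper's single case, and your analysis of it is the same as the paper's: the new indicator at position $i+1$ is forced to equal $1$, and the indicator at position $i$ can drop only when $x_i=x_{i+1}=y_{i+1}$, which forces the old indicator at position $i+1$ to have been $0$. The deferred only-$x$-inverted case goes through by the symmetric computation and the both-inverted case is a pure relabeling, so there is no gap; the paper's normalization simply buys a shorter write-up.
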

\begin{proof}
Without loss of generality, let the numbers $x_i$ be increasing
in lexicographical order. Therefore $x_i=x'_i$ for all $i$. It is
possible to sort the sequence $y_i$ by consecutive application of
\emph{sorting transpositions}, that is swaping $y_i$ with $y_j$ if
$i<j$ and $y_i>y_j$. An example of such an process is the well known
bubble sort algorithm. We show that one sorting transposition
preserves the condition $x_i\leq y_i$ for all $i$, and does not shrink
the set $\{i : x_i < y_i\}$. In one such transposition, the swapped
positions $i,j$ are independent of all the others, so we may assume
that there are no others. In particular $n=2$, $x_1\leq x_2$,
$y_1> y_2$, $x_1\leq y_1$, $x_2\leq y_2$, $y'_1=y_2$, $y'_2=y_1$.
First $x_1\leq x_2\leq y_2$ and $x_2\leq y_2<y_1$, so $x_1\leq y'_1$
and $x_2 < y'_2$. This shows that $x_i\leq y_i$ for all $i$. Now, let
us investigate the number of strict inequalities. Since
$x_2 < y'_2$, the size of the set $\{i : x'_i<y'_i\}$ is at least 1. If
the size equals two, we are done. Otherwise $x_1=y'_1$, so
$x_1=x_2=y_2$. Since $x_2=y_2$, the size of the set
$\{i : x_i<y_i\}$ is at most one, so it is not larger than
$\{i : x'_i<y'_i\}$.
\end{proof}

\begin{claim}
\label{strict-t-compatibility}
For any
$x_1,\ldots,x_n,y_1,\ldots,y_n\in\Q$ such that $x_i\leq y_i$ for all
$i$, we have $t^{\alg A}_n(x_1,\ldots,x_n)
\leq t^{\alg A}_n(y_1,\ldots,y_n)$.
The inequality is strict if  $x_i < y_i$ for at least three $i$.
\end{claim}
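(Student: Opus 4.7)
The plan is to reduce to the sorted case using Lemma~\ref{wlog-sorted} and then compare the defining sums termwise. Concretely, let $x'_1\le\cdots\le x'_n$ and $y'_1\le\cdots\le y'_n$ be the sorted reorderings of $(x_i)$ and $(y_i)$. By Lemma~\ref{wlog-sorted} we still have $x'_i\le y'_i$ for every $i$, and the number of strict inequalities does not decrease; in particular, if $\{i:x_i<y_i\}$ has size at least $3$, then so does $\{i:x'_i<y'_i\}$.

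By definition,
\[
t^{\alg A}_n(x_1,\ldots,x_n) = \frac{x'_2+\cdots+x'_{n-1}}{n-2}, \qquad
t^{\alg A}_n(y_1,\ldots,y_n) = \frac{y'_2+\cdots+y'_{n-1}}{n-2}.
\]
Summing the inequalities $x'_i\le y'_i$ for $i=2,\ldots,n-1$ gives the weak inequality immediately.

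For strictness, suppose $\{i:x'_i<y'_i\}$ has size at least $3$. Since this set can contain the two boundary indices $1$ and $n$ at most once each, it must contain some index in $\{2,\ldots,n-1\}$. Then at least one of the terms in the sum $x'_2+\cdots+x'_{n-1}$ is strictly less than the corresponding term in $y'_2+\cdots+y'_{n-1}$, while all other terms satisfy the weak inequality, so the total sum is strictly smaller and the claim follows.

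I do not foresee a real obstacle: the main conceptual work is already packaged in Lemma~\ref{wlog-sorted}, and the remaining step is a termwise comparison plus a pigeonhole argument at the boundary. The only subtle point worth stating carefully is the reason why ``at least three strict inequalities'' is the right threshold for $t^{\alg A}_n$ (two boundary positions to absorb), which is exactly what the hypothesis provides.
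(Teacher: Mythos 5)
Your proof is correct and follows the same route as the paper: sort both tuples via Lemma~\ref{wlog-sorted}, compare the sums $x'_2+\cdots+x'_{n-1}$ and $y'_2+\cdots+y'_{n-1}$ termwise, and note that three strict inequalities cannot all land on the two boundary positions. No issues.
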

Indeed, we can assume that $x_i$ and $y_i$ are sorted by
Lemma~\ref{wlog-sorted}. The first part is then clear from
definition of $t^{\alg A}$. If $x_i < y_i$ for at least three $i$, it
happens for at least one $i\neq 1,n$, and that $x_i<y_i$ causes the strict
inequality.

Consider the algebras
$\alg A_n = (\Q, t^{\alg A}_n)$, $\alg B_n = (\Q, t^{\alg B}_n)$.
For $m\geq 1$ define the sets
$U\subset\Q^2, V_m\subset\Q^m, W_m\subset \Q^{2+m}$ as follows:
$$
U = \{(a_1,a_2) : a_1+a_2=1\}, \quad
$$$$
V_m = \{(b_1,\ldots,b_m) : b_1\ldots b_m\geq 0\hbox{ and there is a nonzero $b_i$.}\}
$$
$$
W_m = \{(a_1,a_2,b_1,\ldots,b_m) :
$$$$
(a_1+a_2 < 1\hbox{ and }b_1\ldots b_m\geq 0)
\hbox{ or }(a_1+a_2 = 1\hbox{ and }b_1=\cdots=b_m=0))\}
$$

\begin{claim}
\label{claim-U}
For any $n\geq 5$, the set $U$ is a subuniverse of $\alg A_n^2$.
\end{claim}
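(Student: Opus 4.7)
The plan is to verify the claim by a direct computation that exploits the symmetry created by the constraint $a_1 + a_2 = 1$. The operation $t^{\alg A}_n$ averages the middle $n-2$ of its inputs (discarding the min and the max), and the key observation is that sorting the second coordinates is the same as reverse-sorting the first coordinates, so the ``middle'' positions coincide.

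More concretely, suppose we are given $n$ pairs $(a_1^{(k)}, a_2^{(k)}) \in U$ for $k=1,\ldots,n$, so $a_1^{(k)} + a_2^{(k)} = 1$ for every $k$. First I would pick a permutation $\sigma$ of $\{1,\ldots,n\}$ that sorts the first coordinates in non-decreasing order, i.e.\
$$
a_1^{(\sigma(1))} \le a_1^{(\sigma(2))} \le \cdots \le a_1^{(\sigma(n))}.
$$
Since $a_2^{(k)} = 1 - a_1^{(k)}$, the same permutation satisfies $a_2^{(\sigma(1))} \ge a_2^{(\sigma(2))} \ge \cdots \ge a_2^{(\sigma(n))}$, so the permutation $\sigma'$ defined by $\sigma'(k) = \sigma(n+1-k)$ sorts the second coordinates in non-decreasing order.

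Now by definition of $t^{\alg A}_n$, the first-coordinate output is $\frac{1}{n-2}\sum_{k=2}^{n-1} a_1^{(\sigma(k))}$, while the second-coordinate output is $\frac{1}{n-2}\sum_{k=2}^{n-1} a_2^{(\sigma'(k))} = \frac{1}{n-2}\sum_{k=2}^{n-1} a_2^{(\sigma(n+1-k))}$. Re-indexing the second sum by $k\mapsto n+1-k$ shows it runs over the same index set $\{\sigma(2),\ldots,\sigma(n-1)\}$ as the first. Adding the two outputs yields
$$
\frac{1}{n-2}\sum_{k=2}^{n-1}\bigl(a_1^{(\sigma(k))} + a_2^{(\sigma(k))}\bigr) = \frac{1}{n-2}\cdot(n-2)\cdot 1 = 1,
$$
so the resulting pair lies in $U$, which is exactly what we wanted.

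There is essentially no obstacle here; the only minor point to be careful about is the presence of ties among the $a_1^{(k)}$, but one can simply pick any single permutation $\sigma$ that sorts the first coordinates and note that it automatically reverse-sorts the second coordinates, so the middle $n-2$ index set is unambiguous and identical for both sums.
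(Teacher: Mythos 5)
Your proof is correct and is exactly the paper's argument, just written out in full: the paper's one-line proof notes that if $x_1,\ldots,x_n$ is non-decreasing then $1-x_n,\ldots,1-x_1$ is non-decreasing, which is precisely your observation that sorting the first coordinates reverse-sorts the second, so the two ``middle'' index sets coincide and the outputs sum to $1$.
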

The claim follows from the fact that if $x_1,x_2,\ldots,x_n$ is
non-decreasing, then also $1-x_n,\ldots,1-x_2,1-x_1$ is
non-decreasing.

\begin{claim}
\label{claim-V}
For any $n\geq 5$, $2m < n$ the set $V_m$ is a subuniverse of
$\alg B_n^m$.
\end{claim}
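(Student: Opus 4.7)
The plan is to verify the two defining conditions of $V_m$ separately when we apply $t^{\alg B}_n$ coordinatewise to $n$ tuples $v^{(1)},\ldots,v^{(n)} \in V_m$. Write the result as $w=(w_1,\ldots,w_m)$ with $w_j = t^{\alg B}_n(v^{(1)}_j,\ldots,v^{(n)}_j)$. Nonnegativity is the easy part: each column $(v^{(1)}_j,\ldots,v^{(n)}_j)$ consists of nonnegative rationals (by definition of $V_m$), so after sorting all entries remain nonnegative, and $w_j$ is an average of $n-4$ such entries, hence $w_j \geq 0$.

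The real content is showing that some coordinate of $w$ is strictly positive. Here I would first observe the following arithmetic fact about $t^{\alg B}_n$: if the input column has at least three strictly positive entries among its $n$ values, then after sorting the third-largest entry (namely $x_{n-2}$) is strictly positive, and since $x_{n-2}$ appears in the average $(x_3+\cdots+x_{n-2})/(n-4)$, we get $t^{\alg B}_n$ on that column strictly positive.

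So it suffices to exhibit some coordinate $j \in [m]$ such that at least three of the values $v^{(1)}_j,\ldots,v^{(n)}_j$ are strictly positive. This is a pigeonhole argument: for each $k \in [n]$, since $v^{(k)} \in V_m$, there is at least one index $j_k \in [m]$ with $v^{(k)}_{j_k} > 0$. The map $k \mapsto j_k$ distributes $n$ elements into $m$ buckets, and the assumption $2m < n$ means $n/m > 2$, so by pigeonhole some bucket $j$ receives at least $\lceil n/m \rceil \geq 3$ elements $k$. Then the $j$-th column has at least three strictly positive entries, so $w_j > 0$ by the observation above, showing $w \in V_m$.

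The only place where anything could be subtle is the precise counting in the last step, and the relationship between the strict inequality $2m<n$ and the requirement of three positive entries; everything else is bookkeeping on sorted averages. I do not anticipate any serious obstacle.
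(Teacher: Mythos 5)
Your proof is correct and follows essentially the same route as the paper: the observation that three positive entries in a column force the sorted entry $x_{n-2}$ (hence the average $(x_3+\cdots+x_{n-2})/(n-4)$) to be positive, combined with the pigeonhole argument that $2m<n$ forces some coordinate to carry at least three positive entries. The explicit treatment of nonnegativity and of the inequality $\lceil n/m\rceil\geq 3$ is only slightly more detailed than the paper's version.
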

Indeed, if at least three of $x_1,\ldots,x_n$ are non-zero, then
$t^{\alg B}$ is also non-zero. Consider $m$-tuples
$\bar x_1, \bar x_2, \ldots, \bar x_n$. Every $m$-tuple
$\bar x_i$ has a non-zero position $p_i$. Since $2m < n$, one of
the positions has to repeat three times, $p=p_{i_1}=p_{i_2}=p_{i_3}$.
So the $m$-tuple $t(\bar x_1,\ldots \bar x_n)$ has a non-zero
element at the position $p$.

\begin{claim}
\label{claim-W}
For any $m\geq 1, n\geq 5$, the set $W_m$ is a subuniverse of
$\alg A_n^2\times\alg B_n^m$.
\end{claim}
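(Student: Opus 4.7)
The plan is to apply the operation that acts as $t^{\alg A}_n$ on the first two coordinates and as $t^{\alg B}_n$ on the remaining $m$ coordinates to $n$ arbitrary tuples $(a_{i,1}, a_{i,2}, b_{i,1}, \ldots, b_{i,m}) \in W_m$, and to verify that the resulting tuple $(a'_1, a'_2, b'_1, \ldots, b'_m)$ still lies in $W_m$. I split the index set $[n]$ into $I_1 = \{i : a_{i,1} + a_{i,2} < 1\}$ (on which all $b_{i,j} \geq 0$) and its complement $I_2$ (on which $a_{i,1} + a_{i,2} = 1$ and $\bar b_i = 0$).

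The first step is to show $a'_1 + a'_2 \leq 1$. Set $y_i = 1 - a_{i,2}$, so $a_{i,1} \leq y_i$ for every $i$. The map $x \mapsto 1 - x$ reverses the sorted order of the $a_{i,2}$, and since the middle $n-2$ positions are symmetric under reversal, $t^{\alg A}_n(1 - a_{1,2}, \ldots, 1 - a_{n,2}) = 1 - a'_2$. Claim~\ref{strict-t-compatibility} then yields $a'_1 \leq 1 - a'_2$, and if $|I_1| \geq 3$ it even gives the strict form $a'_1 + a'_2 < 1$.

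Next I split on $|I_1|$. When $|I_1| = 0$, Claim~\ref{claim-U} gives $a'_1 + a'_2 = 1$ while idempotence gives $\bar b' = 0$, placing the output in the equality disjunct. When $|I_1| \geq 3$, the previous step delivers $a'_1 + a'_2 < 1$, and $b'_j \geq 0$ holds componentwise because every $b_{i,j}$ is non-negative, placing the output in the strict disjunct. The delicate case is $|I_1| \in \{1,2\}$: here $a'_1 + a'_2 = 1$ is not ruled out, so I prove $\bar b' = 0$ instead. At each coordinate $j$, at most two of the inputs $b_{i,j}$ are positive (those with $i \in I_1$) and the rest vanish, so after sorting the positive entries occupy the top two positions and are discarded by $t^{\alg B}_n$, whose remaining $n-4 \geq 1$ middle entries are all zero. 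So $\bar b' = 0$, and together with $a'_1 + a'_2 \leq 1$ the output falls into the strict or equality disjunct according to whichever value $a'_1 + a'_2$ actually takes.

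The main subtlety is the range $|I_1| \in \{1,2\}$, where the strict-compatibility argument is too weak to rule out $a'_1 + a'_2 = 1$. This is precisely where the asymmetry between $t^{\alg A}_n$ (dropping one extreme on each side) and $t^{\alg B}_n$ (dropping two) earns its keep: the few possibly positive $b$-contributions are exactly those that $t^{\alg B}_n$ throws away, forcing $\bar b'$ to be zero and rescuing the $W_m$ condition. Everything else is routine bookkeeping with Claims~\ref{claim-U} and \ref{strict-t-compatibility}.
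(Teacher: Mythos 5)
Your proof is correct and follows essentially the same route as the paper's: the same case split on how many input tuples satisfy $a_{i,1}+a_{i,2}<1$, with Claim~\ref{strict-t-compatibility} forcing strict inequality when there are at least three, and $t^{\alg B}_n$ discarding the at most two positive entries per coordinate otherwise. Your separate treatment of $|I_1|=0$ and the explicit verification that reversal fixes the middle block of $t^{\alg A}_n$ are just slightly more detailed renderings of steps the paper leaves implicit.
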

For the same reason as in Claim~\ref{claim-U}, the projection of
$W_i$ to $\alg A^2$ is a subuniverse of $\alg A^2$. The question is
about subtle detail how it interacts with the $\alg B^m$-part.
Let us take $(2+m)$-tuples $\bar x_1,\ldots,\bar x_n\in W_m$ and show
that $t(\bar x_1,\ldots,\bar x_n)$ belongs to $W_m$ as well. Let
$a_{i,j}, b_{i,j}$ be matrices such that
$\bar x_j = (a_{1,j},a_{2,j},b_{1,j},\ldots,b_{m,j})$. We analyze two
cases:
\begin{enumerate}
\item For at most two columns $j$ it happens that
$a_{1,j}+a_{2,j} < 1$. Then all the other columns have zero
$\alg B^m$-part, so $t^{\alg B}(b_{i,1}) = 0$ for any $\alg B$-row
$i$. Hence $t(\bar x_1,\ldots,\bar x_n)\in W_m$.
\item For at least three columns $j$ it happens that
$a_{1,j}+a_{2,j} < 1$. In other words, at these three positions $j$ it
happens that $a_{1,j} < 1-a_{2,j}$ while non-strict inequality is
satisfied everywhere. Thus, by Claim~\ref{strict-t-compatibility}, we
have
$$
t^{\alg A}(a_{1,1},\ldots,a_{1,n}) < t^{\alg A}(1-a_{2,1},\ldots,1-a_{2,n}) =
1-t^{\alg A}(a_{2,1},\ldots,a_{2,n}).
$$
Equivalently,
$$
t^{\alg A}(a_{1,1},\ldots,a_{1,n}) + t^{\alg A}(a_{2,1},\ldots,a_{2,n}) < 1,
$$
so $t(\bar x_1,\ldots,\bar x_n)$ belongs to $W_i$.
\end{enumerate}
So, in both cases, the result belongs to $W_m$, and the claim is
established.

We are now ready to construct the counterexamples.

\begin{theorem}
For any $n,m$ such that $n\geq 5$ and $2m < n$,
there is an algebra having an $n$-ary NU-term, $n\geq 5$, but no $(2+m)$-terms.
\end{theorem}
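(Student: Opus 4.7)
The plan is to let $\alg C_n := \alg A_n^2 \times \alg B_n^m$, i.e., the algebra with universe $\Q^{2+m}$ whose single $n$-ary basic operation acts coordinatewise as $t^{\alg A}_n$ on the first two coordinates and as $t^{\alg B}_n$ on the remaining $m$. Since both $t^{\alg A}_n$ and $t^{\alg B}_n$ are NU of arity $n$, so is the basic operation of $\alg C_n$, which therefore has an $n$-ary NU term.

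To show $\alg C_n$ has no $(2+m)$-terms, I would assume towards a contradiction that idempotent terms $f,g_1,g_2$ of arities $2+m,2,m$ satisfy the $(2+m)$-identities in $\alg C_n$. Because $\alg C_n$ is a coordinatewise product, every term $s$ of $\alg C_n$ induces terms $s^A$ on $\alg A_n$ (from coordinates $1,2$) and $s^B$ on $\alg B_n$ (from coordinates $3,\ldots,2+m$) of the same formal shape, each satisfying the identities inherited from $s$ in its own algebra. The main step is then to evaluate $f$ at the standard unit vectors $\bar e_1,\ldots,\bar e_{2+m}\in\Q^{2+m}$. Each $\bar e_j\in W_m$ by a direct check, so Claim~\ref{claim-W} gives $f(\bar e_1,\ldots,\bar e_{2+m})\in W_m$; expanding coordinatewise using the $(2+m)$-identities, this element equals
$$
\bigl(g_1^A(1,0),\; g_1^A(0,1),\; g_2^B(\vec u_1),\;\ldots,\; g_2^B(\vec u_m)\bigr),
$$
where $\vec u_i\in\Q^m$ denotes the unit vector with $1$ at position $i$.

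Two further applications of the subuniverse claims finish the argument. First, $(1,0),(0,1)\in U$, and Claim~\ref{claim-U} forces $g_1^A(1,0)+g_1^A(0,1)=1$; so the element above has $a_1+a_2=1$ and hence, by the definition of $W_m$, every $g_2^B(\vec u_i)=0$. Second, $\vec u_1,\ldots,\vec u_m\in V_m$, so by Claim~\ref{claim-V} the element $g_2^B(\vec u_1,\ldots,\vec u_m)=\bigl(g_2^B(\vec u_1),\ldots,g_2^B(\vec u_m)\bigr)$ lies in $V_m$ and must therefore have a nonzero entry, contradicting the previous sentence.

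The only real subtlety will be the bookkeeping between a single $(2+m)$-term $s$ of $\alg C_n$ and its projections $s^A,s^B$: it is the subuniverse $W_m$ that couples the $\alg A$-side constraint (coming from $U$) with the $\alg B$-side constraint (coming from $V_m$), and once that coupling is made explicit the contradiction is forced by idempotence together with the $(2+m)$-identities. The hypotheses $n\ge 5$ and $2m<n$ are used exactly where Claims~\ref{claim-U},~\ref{claim-V},~\ref{claim-W} need them.
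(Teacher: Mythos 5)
Your proposal is correct and follows essentially the same route as the paper: build the product of the $\alg A_n$- and $\alg B_n$-sides, evaluate the hypothetical $(2+m)$-terms on unit vectors, and derive the contradiction from the three subuniverses $U$, $V_m$, $W_m$ (the paper uses $\alg A_n\times\alg B_n$ rather than $\alg A_n^2\times\alg B_n^m$, but these generate the same variety, so the difference is cosmetic). The coordinatewise bookkeeping you flag as the "only real subtlety" checks out exactly as you describe.
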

\begin{proof}
The algebra is $\alg C_n = \alg A_n\times\alg B_n$. For a contradiction,
suppose that $\alg C_n$ has $(2+m)$-terms $f,g_1,g_2$. These terms are
common for all the algebras in the variety generated by $\alg C$. In
particular, there are operations $g^{\alg A}_1,g^{\alg A}_2,f^{\alg A}$
on $\alg A_n$ and 
$g^{\alg B}_1,g^{\alg B}_2,f^{\alg B}$ on $\alg B_n$ such that
\begin{align*}
g^{\alg A}_1(1,0) &= f^{\alg A}(1,0,0,0,0\ldots,0,0) = a_1,\cr
g^{\alg A}_1(0,1) &= f^{\alg A}(0,1,0,0,0\ldots,0,0) = a_2,\cr
g^{\alg B}_2(1,0,0,\ldots,0,0) &= f^{\alg B}(0,0,1,0,0\ldots,0,0) = b_1,\cr
g^{\alg B}_2(0,1,0,\ldots,0,0) &= f^{\alg B}(0,0,0,1,0\ldots,0,0) = b_2,\cr
&\vdots\cr
g^{\alg B}_2(0,0,0,\ldots,0,1) &= f^{\alg B}(0,0,0,0,0\ldots,0,1) = b_m.\cr
\end{align*}
The tuple $(a_1,a_2,b_1,\ldots,b_n)$ belongs to $W_m$ since $W_m$
contains all the columns on the right hand side. Similarly,
$(a_1,a_2)\in U$ and $(b_1,\ldots,b_m)\in V_m$ by left hand side.
But there are is no such tuple $W_m$ that is composed from the tuples in
$U$ and $V_m$.
\end{proof}

\begin{theorem}
There is an algebra in a congruence distributive variety that has no
$(2+m)$-terms.
\end{theorem}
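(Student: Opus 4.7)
The plan is to exhibit an algebra $\alg D$ that (i) lies in a congruence distributive variety and (ii) has each of the algebras $\alg C_n$ from the previous theorem (for $n \geq 5$) as a homomorphic image. Once both properties are secured, the absence of $(2+m)$-terms is obtained by a one-line projection argument: any hypothetical $(2+m)$-terms $f, g_1, g_2$ on $\alg D$ would descend under the surjection $\alg D \twoheadrightarrow \alg C_{2m+1}$ to $(2+m)$-terms on $\alg C_{2m+1}$, contradicting the previous theorem since $2m < 2m+1$.

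The candidate for $\alg D$ is a subdirect product of the family $\{\alg C_n : n \geq 5\}$ realized in a common signature $\Sigma$. On each factor $\alg C_{n'}$ one interprets the $n$-ary operation symbol $\tau_n$ as the native NU $(t^{\alg A}_n, t^{\alg B}_n)$ when $n = n'$ and as a safe default (e.g., a projection) when $n \neq n'$; this makes each $\alg C_n$ a quotient of $\alg D$ via the $n$-th projection, giving~(ii). A critical feature of the chosen $\Sigma$ is that no single operation must act as a uniform near-unanimity of any fixed arity on $\alg D$: otherwise the trivial recipe $f = g_1 = \pi_1$, $g_2 = \mathrm{NU}$ would furnish $(2+m)$-terms for all $m$ above the NU's arity and collapse the argument. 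Keeping $\Sigma$ either free of NU symbols or populated with NU symbols whose interpretations fail to be NU on all factors simultaneously is therefore essential.

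The main obstacle is establishing the CD property~(i). The Jónsson length derivable from the native NU of $\alg C_n$ grows linearly in $n$, so no single finite Jónsson sequence in $\Sigma$ is uniform across all factors with $n \geq 5$. I expect to handle this by taking $\Sigma$ to contain a countably infinite family of ternary Jónsson-style symbols $d_0, d_1, \ldots$, interpreting them on each factor $\alg C_n$ as the Jónsson sequence derivable from the native NU (and as $\pi_3$ past the length needed there), and then verifying that the variety generated by $\alg D$ satisfies a uniform Jónsson chain. The delicate balance to maintain is: enough Jónsson structure in $\Sigma$ to witness CD on the product, yet no derived near-unanimity in the clone of $\alg D$ that would reintroduce the $(2+m)$-terms via the trivial recipe.
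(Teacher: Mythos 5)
Your reduction of ``no $(2+m)$-terms'' to the previous theorem via a surjection $\alg D\twoheadrightarrow\alg C_{n}$ with $n>\max(4,2m)$ is fine: the $(2+m)$-terms are specified by identities, so they pass to homomorphic images. The genuine gap is in step~(i), and it is not a technicality. Congruence distributivity of the variety generated by $\alg D$ is, by J\'onsson's theorem, equivalent to the existence of a J\'onsson chain $d_0,\dots,d_N$ of some \emph{fixed finite} length $N$ with $d_0=\pi_1$ and $d_N=\pi_3$ as identities of the variety. Your construction interprets $d_i$ on the factor $\alg C_n$ as the $i$-th term of the chain derived from the native $n$-ary NU, padding with $\pi_3$ beyond its length $k_n$; but the $k_n$ grow with $n$, so for no finite $N$ does $d_N(x,y,z)=z$ hold on every factor, and an infinite graded family of $d_i$'s is not a witness of CD. Worse, the two halves of your plan pull against each other: if every $\alg C_n$ ($n\ge 5$) is a homomorphic image of an algebra in the variety of $\alg D$ and that variety has a J\'onsson chain of length $N$, then every $\alg C_n$ has a J\'onsson chain of length $N$ built from $t_n$ alone. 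That uniform bound over the whole family is exactly the hard content of the theorem; it is nowhere established, and the padding device cannot establish it.

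The paper sidesteps all of this by not varying $n$: it fixes $\alg C_6$ and replaces its basic operation by the single $4$-ary minor $s(x,y,z,w)=t(x,y,z,w,w,w)$. This destroys near-unanimity, so the trivial recipe for $(2+m)$-terms is unavailable for \emph{every} $m$ at once, while explicit directed J\'onsson terms written as minors of $s$ give CD outright. What replaces your projection argument is the observation that, because $s$ repeats its last argument three times, the relation $V_m$ remains a subuniverse of $(\alg B')^m$ for all $m$ simultaneously (three nonzero entries in a column force a nonzero output of $t^{\alg B}_6$), so the relational obstruction via $U$, $V_m$, $W_m$ from the preceding proof applies uniformly in $m$. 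If you want to rescue your route, you must first prove the uniform J\'onsson bound for the family $\{\alg C_n\}$ --- and the natural way to attempt that is to pass to a bounded-arity minor of $t_n$, which leads you straight back to the paper's one-algebra construction.
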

\begin{proof}
The proof is similar, we take the algebra
$\alg C_6 = \alg A_6\times\alg B_6$. We just modify it a bit in order
to make $V_m$ a subuniverse for any $m$. Let $s$ be the following 4-ary
minor of $t$
$$
s(x,y,z,w) = t(x,y,z,w,w,w).
$$
Consider the algebra $\alg C' = (\Q^2, s^{\alg C})$. The algebra
$\alg C'$ is congruence distributive, since it has the following
directed J\'onsson terms written as minors of the term $s$:

\begin{align*}
s(xyzz) &= t(xyzzzz),\cr
s(xxyz) &= t(xxyzzz),\cr
s(zzyx) &= t(xxxyzz),\cr
s(zyxx) &= t(xxxxyz).\cr
\end{align*}

For the definition of directed J\'onsson terms, we refer the reader
to~\cite{Kazda2018}.

On the other hand, $\alg C'$ does not have any $(2+m)$-terms. For
a contradiction, let us assume that there are
term operations $f^{\alg C}, g_1^{\alg C},g_2^{\alg C}$ in the algebra
$\alg C$. So there are such terms even in
$\alg A'=(\Q,s^{\alg A})$ and $\alg B'=(\Q,s^{\alg B})$. We consider
the same $2+m$ equalities as in the previous proof, resulting in
$a_1,a_2,b_1,\ldots,b_m$. Since the basic operations of algebras
$\alg A',\alg B'$ are defined from the operations of the algebras
$\alg A,\alg B$, the set $U$ is still a subuniverse of $(\alg A')^2$ and
the set $W_m$ is still a
subuniverse of $(\alg A')^2\times(\alg B')^m$. So $(a_1,a_2)\in U$ and
$(a_1,a_2,b_1,\ldots,b_m)\in W_m$. We cannot directly use
Claim~\ref{claim-V} to ensure that $V_m$ is a subuniverse of
$(\alg B')^m$ since the claim assumes $2m<6$. However, it is still true.
We can check it manually: If $\bar x,\bar y,\bar z,\bar w\in V$ and
$w_i>0$ for some $i$, then even
$$
s^{\alg B}(x_i,y_i,z_i,w_i) = t^{\alg B}(x_i,y_i,z_i,w_i,w_i,w_i) > 0,
$$
so $s^{\alg B}(\bar x,\bar y,\bar z,\bar w)$ has a non-zero position.
Therefore $V_m$ is a subuniverse of $(\alg B')^m$,
$(b_1,\ldots,b_m)\in V_m$, and we get the same contradiction as
in the previous proof.
\end{proof}

\section{Further work}
\label{further-work}

Since Question~\ref{main-question} remained open, the main
objective is still to find out whether or not the $SD(\wedge)$
property is characterized by a strong Maltsev condition.
The $(3+n)$-terms are general enough for $SD(\wedge)$ while the
$(2+n)$-terms are too strong. Therefore we suggest $(3+3)$-terms as the
candidate for a strong Maltsev condition, or a good starting point
for proving the opposite.

\begin{question}
  Is there a $SD(\wedge)$ variety that does not have $(3+3)$-terms?
\end{question}

It is also reasonable to start
with a stronger property than congruence meet-semidistributivity,
namely simple congruence distributivity, or the one in
Theorem~\ref{wnu-glued}.

\begin{question}
  Are $(3+3)$-terms implied by
  \begin{enumerate}[(a)]
  \item directed J\'onsson terms? (equivalent to congruence
    distributivity, see~\cite{Kazda2018})
  \item ternary and 4-ary weak NU terms $w_3.w_4$ such that
    $w_3(y,x,x)=w_4(y,x,x,x)$?
  \end{enumerate}
\end{question}

Mikl\'os Mar\'oti with Ralph McKenzie (see Theorem 1.3 of
\cite{CD-to-WNU}) proved that congruence
distributivity implies the
existence of all at least ternary weak NU terms.
However, the catalogue of counterexamples is so weak, that even the
``glued'' weak NU terms, as in item (b), are still plausible
candidates
for the strong Maltsev condition too. On the other hand, congruence
distributivity is the weakest general condition under which we know
about the weak NU terms. So we ask the following.

\begin{question}
  Is the existence of a weak NU term implied by the $SD(\wedge)$ property?
  In particular, is it implied by $(3+3)$-terms?
\end{question}

\bibliographystyle{plain}
\bibliography{bib-file.bib}

\end{document}